 \newtheorem{theorem}{Theorem}[section]
 \newtheorem{corollary}[theorem]{Corollary}
 \newtheorem{lemma}[theorem]{Lemma} 
 \newtheorem{proposition}[theorem]{Proposition}
 \theoremstyle{definition}
 \theoremstyle{remark}
 \newtheorem*{remark}{Remark}
  \numberwithin{equation}{section}
\renewcommand{\theta}{\vartheta}
\DeclareMathOperator{\sform}{\mathfrak{s}}
\DeclareMathOperator{\tform}{\mathfrak{t}}
\DeclareMathOperator{\tfrom}{\mathfrak{t}}
\DeclareMathOperator{\wform}{\mathfrak{w}}
\DeclareMathOperator{\rform}{\mathfrak{r}}
\DeclareMathOperator{\ran}{ran}
\DeclarePairedDelimiterX\sipt[2]{\langle}{\rangle^{}_{\tform}}{#1,#2}
\DeclarePairedDelimiterX\sips[2]{\langle}{\rangle^{}_{S}}{#1\,\delimsize\vert\,#2}
\DeclarePairedDelimiterX\sipv[2]{(}{)_{v}}{#1\,\delimsize\vert\,#2}
\DeclarePairedDelimiterX\sipw[2]{(}{)_{w}}{#1\,\delimsize\vert\,#2}
\newcommand{\dupN}{\mathbb{N}}
\newcommand{\seq}[1]{(#1_{n})_{n\in\dupN}}
\newcommand{\dom}{\operatorname{dom}}
\newcommand{\D}{\mathcal{D}}
\newcommand{\DD}{\mathcal{D_{*}}}
\newcommand{\DDD}{\mathcal{D_{**}}}
\newcommand{\hil}{\mathcal{H}}
\newcommand{\hilt}{\hil^{}_{\tform}}
\DeclarePairedDelimiterX\abs[1]{\lvert}{\rvert}{#1}
\DeclarePairedDelimiterX\sip[2]{(}{)}{#1,#2}
\DeclarePairedDelimiterX\sipr[2]{\langle}{\rangle_{\rform}}{#1,#2}
\DeclarePairedDelimiterX\siptilde[2]{(}{)_{\!_{\widetilde{A}}}}{#1\,\delimsize\vert\,#2}
\DeclarePairedDelimiterX\sipf[2]{(}{)_{f}}{#1\,\delimsize\vert\,#2}
\DeclarePairedDelimiterX\sipg[2]{(}{)_{g}}{#1\,\delimsize\vert\,#2}
\DeclarePairedDelimiterX\siptw[2]{(}{)_{\tform+\wform}}{#1\,\delimsize\vert\,#2}
\DeclarePairedDelimiterX\set[2]{\{}{\}}{#1\,:\,#2}
\DeclarePairedDelimiterX\dual[2]{\langle}{\rangle}{#1,#2}
\DeclarePairedDelimiterX\sipa[2]{(}{)_{\!_A}}{#1\,\delimsize\vert\,#2}
\DeclarePairedDelimiterX\sipe[2]{\langle}{\rangle_{\!_{\mathcal E}}}{#1\,\delimsize\vert\,#2}
\DeclarePairedDelimiterX\sipQ[2]{(}{)_{\!_{*}}}{#1\,\delimsize\vert\,#2}
\DeclarePairedDelimiterX\sipab[2]{(}{)_{\!_{A+B}}}{#1\,\delimsize\vert\,#2}
\DeclarePairedDelimiterX\sipb[2]{(}{)_{\!_B}}{#1\,\delimsize\vert\,#2}
\newcommand{\limn}{\lim\limits_{n\rightarrow\infty}}
\renewcommand{\epsilon}{\varepsilon}
\begin{document}
\title{Basic representation theorems of forms}

\author[Z. Sebesty\'en]{Zolt\'an Sebesty\'en}
\author[Zs. Tarcsay]{Zsigmond Tarcsay}
 
\address{%
Zs. Tarcsay \\ Department of Mathematics\\ Corvinus University of Budapest\\ IX. F\H ov\'am t\'er 13-15.\\ Budapest
H-1093 \\ Hungary\\ and Department of Applied Analysis  and Computational Mathematics\\ E\"otv\"os Lor\'and University\\ P\'azm\'any P\'eter s\'et\'any 1/c.\\ Budapest H-1117\\ Hungary}
\email{zsigmond.tarcsay@uni-corvinus.hu}

\thanks{ 
Project no. TKP 2021-NVA-09 has been implemented with the support provided by the Ministry of Innovation and Technology of Hungary from the National Research, Development and Innovation Fund, financed under the TKP2021-NVA funding scheme.}
\address{%
Z. Sebesty\'en \\ Department of Applied Analysis  and Computational Mathematics\\ E\"otv\"os Lor\'and University\\ P\'azm\'any P\'eter s\'et\'any 1/c.\\ Budapest H-1117\\ Hungary }
\email{sebesty@cs.elte.hu}

\subjclass[2010]{Primary: 47A07; 47A20}

\keywords{Nonnegative form,  self-adjoint operator, representation of forms, Friedrichs extension}

\dedicatory{Dedicated to Professor Henk de Snoo  on the occasion of his 80th birthday}

\begin{abstract}
We study maximal representations of nonnegative sesquilinear forms in real or complex Hilbert spaces, that are not necessarily closed or even closable. We associate positive self-adjoint operators with such forms, in a sense similar to Kato's representation theorems. In particular, we give a brief proof of the Friedrichs extension of a densely defined positive operator. 
\end{abstract}

\maketitle

\section{Introduction}

One of the most fundamental results in the theory of unbounded positive operators is the Kato representation theorem for nonnegative closed sesquilinear forms.  This theorem establishes a deep connection between such forms and positive self-adjoint operators on a Hilbert space: every densely defined nonnegative closed form $\tform$ can be represented by a positive self-adjoint operator $T$, in the sense that 
\begin{equation*}
    \sip{Tg}{h}=\tform(g,h),
\end{equation*}
see \cite{kato}. The operator $T$ is typically defined only on a proper subdomain of the form $\tform$, meaning that the above representation does not hold for all vectors $g, h \in \dom \tform$. A more complete result (which supplements the first representation theorem) is given by Kato's second representation theorem, which provides a sharper relationship between the form $\tform$ and the operator $T$ that represents it. According to this theorem, the domains of $\tform$ and $T^{1/2}$ coincide, and on this domain, the following identity holds:
\begin{equation*}
\sip{T^{1/2}g}{T^{1/2}h} = \tform(g,h).    
\end{equation*}
Here, $T^{1/2}$ denotes the positive self-adjoint square root of the operator $T$, which is typically defined using the spectral theorem. Throughout this paper, we allow the Hilbert space $\hil$ to be either complex or real. In the case of a real Hilbert space, the spectral theorem is not available; however, the existence of the square root of a positive self-adjoint operator can still be established in the real setting as well (see e.g. \cite{SZTZS_squareroot}).

In his 1929 paper \cite{Neumann}, J. von Neumann proved that any densely defined, lower semi-bounded symmetric operator admits a lower semi-bounded self-adjoint extension. Neumann conjectured that such an extension could also preserve the lower bound of the original operator; however, he was only able to prove the existence of extensions with a slightly smaller lower bound, i.e., with a lower bound reduced by an arbitrary $\epsilon > 0$. Neumann's conjecture was later proved independently by Stone \cite{Stone} and Friedrichs \cite{Friedrichs}. The construction in this latter paper essentially uses the closure of the form associated with the operator in question, and the operator defined through this closed form is what we now call the Friedrichs extension (cf. \cites{Freudenthal,SebTarcsayOpuscula,SebStoch2007,SebTarcsayWasaa}).
So, by means of Kato's representation theorem, the Friedrichs extension of the positive operator $T$ is, in fact, the positive self-adjoint operator that represents the closure of the form associated with $T$.

In this paper, we study possible representations of nonnegative sesquilinear forms that are not necessarily closed or even closable. Our aim is to describe how positive self-adjoint operators can still be meaningfully associated with forms beyond the scope of Kato's original theorem. As a further contribution, we derive the first and second representation theorems of Kato, and we also provide a short and natural proof of the existence and properties of the Friedrichs extension of a densely defined positive operator. It is also worth emphasizing that all results and proofs presented here are valid over both real and complex Hilbert spaces.
\section{Representation of positive symmetric forms}

In what follows, $\hil$ denotes a real or complex Hilbert space and $\tform$ is a non-negative symmetric form on $\hil$, defined on a dense linear subspace $\D$ of $\hil$. In other words, $\tform$ is a semi-inner product. (We emphasize that we do not assume $\tform$ to be  closed or closable.) 

At this point, we note that it is customary to associate a new Hilbert space to the form $\tform$, namely by completing the pre-Hilbert space $(\D, \sip{\cdot}{\cdot}_1)$, where the inner product is defined by
\[
\sip{\cdot}{\cdot}_1 := \tform(\cdot,\cdot) + \sip{\cdot}{\cdot}.
\]
It is easy to see that the closedness of the form $\tform$ is equivalent to the completeness of the pre-Hilbert space $(\D, (\cdot,\cdot)_1)$ (see \cite{kato}). However, instead of this construction, we will use the following more natural approach, which is better suited to our purposes.
Let 
\begin{equation*}
    \ker \tform\coloneqq \set{x\in \D}{\tform(x,x)=0},
\end{equation*}
called the kernel of $\tform$, which is a linear subspace of $\D$, due to the Cauchy-Schwarz inequality. Then the factor space $\D/\ker \tform$ becomes a pre-Hilbert space with the inner product
\begin{equation*}
    \sipt{h+\ker \tform}{g+\ker \tform},\qquad h,g\in \D.
\end{equation*}
The completion of $\D/\ker\tform$ with the above inner product will be denoted by $\hilt$. The canonical map $Q$ of $\D\subseteq \hil$ to $\hilt$ is given by 
\begin{equation}\label{E:Q}
    Qg\coloneqq g+\ker\tform\qquad(g\in\D).
\end{equation}
We shall use the obvious fact that the range of $Q$ forms a dense linear subspace in $\hilt$. 

Of course, at this point we cannot say anything about regularity properties of $Q$, such as closedness or continuity. However, an elementary representation theorem already holds for $\tform$ as follows:
\begin{lemma}\label{L:L1}
    Let $\tform $ be any nonnegative symmetric form on the dense subspace $\D$ of the real or complex Hilbert space $\hil$. Then 
     \begin{equation}\label{E:QQgh}
         \tform(g,h)=\sip{Q^*Qg}{h},\qquad g\in\DD, h\in\D,
     \end{equation}
     where 
     \begin{equation}\label{E:DD}
         \DD\coloneqq\dom Q^*Q=\set[\Big]{g\in\D}{\sup\set{\abs{\tform(g,h)}^2}{h\in\D, \sip hh\leq 1}<+\infty}.
     \end{equation}
\end{lemma}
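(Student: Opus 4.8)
The plan is to recognize the canonical map $Q$ as a densely defined Hilbert-space operator and then to read the entire statement off the elementary theory of adjoints, without ever invoking closedness or closability of $\tform$. The starting observation is that, by the very definition of the inner product on $\hilt$,
\begin{equation*}
\sipt{Qg}{Qh} = \tform(g,h), \qquad g,h\in\D .
\end{equation*}
Since $\D$ is dense in $\hil$, the operator $Q$ is a densely defined (in general unbounded, and not necessarily closable) linear map from $\hil$ into $\hilt$ with $\dom Q = \D$. Its Hilbert-space adjoint $Q^*\colon\hilt\to\hil$ therefore exists — crucially, the construction of the adjoint requires only a dense domain, not closedness — so I may form $Q^*Q$ on its natural domain $\dom Q^*Q=\set{g\in\D}{Qg\in\dom Q^*}$.

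To obtain the representation formula \eqref{E:QQgh}, I would fix $g\in\dom Q^*Q$ and apply the defining relation of the adjoint: for every $h\in\dom Q=\D$,
\begin{equation*}
\sip{h}{Q^*Qg} = \sipt{Qh}{Qg} = \tform(h,g).
\end{equation*}
Taking complex conjugates and using the symmetry of $\tform$ then gives $\sip{Q^*Qg}{h}=\overline{\tform(h,g)}=\tform(g,h)$, which is exactly the claimed identity.

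For the domain description \eqref{E:DD}, I would use the standard characterization of $\dom Q^*$: a vector $\xi\in\hilt$ lies in $\dom Q^*$ precisely when the functional $h\mapsto\sipt{Qh}{\xi}$ on the dense subspace $\D$ is bounded with respect to the $\hil$-norm, in which case the Riesz representation theorem furnishes $Q^*\xi$. Specializing to $\xi=Qg$ and using $\sipt{Qh}{Qg}=\tform(h,g)$ together with $\abs{\tform(h,g)}=\abs{\tform(g,h)}$, one finds that $g\in\dom Q^*Q$ if and only if $\sup\set{\abs{\tform(g,h)}}{h\in\D,\ \sip hh\leq 1}<+\infty$, which is precisely the condition defining $\DD$ (squaring the supremand is immaterial for finiteness).

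The argument is essentially structural rather than computational, so I do not expect a serious obstacle; the only point demanding care is conceptual, namely the insistence that nothing here requires $\tform$ to be closed or closable. Both the existence of $Q^*$ and the Riesz-based description of $\dom Q^*$ hold for an arbitrary densely defined $Q$, which is exactly what lets the representation survive for forms outside the scope of Kato's theorem. The single genuinely nontrivial ingredient is the boundedness $\Longleftrightarrow$ Riesz-representability equivalence, which is routine.
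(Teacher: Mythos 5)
Your proposal is correct and follows essentially the same route as the paper: both prove \eqref{E:QQgh} by unwinding the defining relation of the adjoint $Q^*$ applied to $\xi=Qg$, and both obtain \eqref{E:DD} by rewriting the membership condition $Qg\in\dom Q^*$ as boundedness of $h\mapsto\tform(g,h)$ on the unit ball of $\D$. The paper's proof is merely a terser version of yours; your added care about the Riesz-representation characterization of $\dom Q^*$, the conjugation step $\sip{Q^*Qg}{h}=\overline{\tform(h,g)}=\tform(g,h)$, and the observation that only density (not closability) of $\dom Q$ is needed are all consistent with, and implicit in, the paper's argument.
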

\begin{proof}
    It is clear that 
    \begin{equation*}
        \tform(g,h)=\sipt{Qg}{Qh}=\sip{Q^*Qg}{h},
    \end{equation*}
    for $g\in\dom Q^*Q$ and $h\in\D$. Furthermore, we also have
    \begin{align*}
        \DD=\dom Q^*Q&=\set{g\in\D}{Qg\in\dom Q^*}\\
        &=\set[\Big]{g\in\D}{\sup\set{\abs{\sipt{Qg}{Qh}}^2}{h\in\D, \sip hh\leq 1}<+\infty}\\
        &=\set[\Big]{g\in\D}{\sup\set{\abs{\tform(g,h)}^2}{h\in\D, \sip hh\leq 1}<+\infty},
    \end{align*}
    which proves \eqref{E:DD}.
\end{proof}
\begin{remark}
 It is clear by the very definition of $\DD$ that the following two statements are equivalent to each other:
 \begin{enumerate}[label=\textup{(\roman*)}]
     \item $g\in \DD$,
     \item $g\in \D$ and there exists a constant $C_g \geq 0$ such that 
     \begin{equation}\label{E:DD2}
         \abs{\tform(g,h)}^2\leq C_g\cdot \sip{h}{h}\qquad \mbox{for all $h\in \D$}.
     \end{equation}
 \end{enumerate}
\end{remark}
In what follows, we focus on the representability of the form $\tform$ by positive self-adjoint operators. By a representation we mean a positive and self-adjoint operator $S$ on $\hil$ which satisfies 
\begin{equation}\label{E:reprezdef}
    \sip{Sg}{h}=\tform(g,h)\qquad (g\in\D\cap\dom S, h\in \D).
\end{equation}
Of course, the set $\D \cap \dom S$ can be rather "thin" (even the trivial subspace $\{0\}$). The interesting case is when it is "large enough" to provide nontrivial information about the form $\tform$.

 By the following simple observation, this intersection is always a subset of $\DD$. In other words, $\DD$ is the largest possible set on which the form $\tform$ can be represented by an operator.
\begin{proposition}\label{P:2.2}
    Let $\tform$ be a positive symmetric form in the real or complex Hilbert space $\hil$. If $S$ is a positive self-adjoint operator representation of $\tform$ in the sense of \eqref{E:reprezdef}, then
    \begin{equation*}
        \dom S\cap \D\subseteq \DD.
    \end{equation*}
\end{proposition}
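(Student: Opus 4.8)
The plan is to read off membership in $\DD$ directly from the characterization \eqref{E:DD} (equivalently, from the Remark), using nothing more than the defining identity \eqref{E:reprezdef} of a representation together with the Cauchy--Schwarz inequality of the ambient space $\hil$. In other words, I would show that if $g\in\dom S\cap\D$, then the linear functional $h\mapsto\tform(g,h)$ on $\D$ is $\hil$-bounded, which is exactly what the definition of $\DD$ demands.

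First I would fix an arbitrary $g\in\dom S\cap\D$. Since $g$ lies in $\D\cap\dom S$ and the representation identity \eqref{E:reprezdef} is valid for \emph{every} test vector $h\in\D$ while $g$ is held fixed, I have $\tform(g,h)=\sip{Sg}{h}$ for all $h\in\D$. Here $Sg$ is a genuine element of $\hil$ (precisely because $g\in\dom S$), so the right-hand side is an ordinary $\hil$-inner product, and the Cauchy--Schwarz inequality yields
\begin{equation*}
    \abs{\tform(g,h)}^2=\abs{\sip{Sg}{h}}^2\leq \sip{Sg}{Sg}\cdot\sip{h}{h},\qquad h\in\D.
\end{equation*}
This is exactly the boundedness condition \eqref{E:DD2} with the constant $C_g:=\sip{Sg}{Sg}$, whence the Remark (or directly the supremum characterization in \eqref{E:DD}) gives $g\in\DD$. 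As $g$ was an arbitrary element of $\dom S\cap\D$, the inclusion $\dom S\cap\D\subseteq\DD$ follows.

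I do not anticipate any genuine obstacle here, as the argument is essentially a one-line Cauchy--Schwarz estimate. The only point that warrants a moment's care is ensuring that the representation identity is genuinely available for the full range of test vectors $h\in\D$ once $g\in\D\cap\dom S$ is fixed, which is precisely the content of \eqref{E:reprezdef}. It is also worth noting that neither the positivity nor the self-adjointness of $S$ is actually used for this particular inclusion; all that is needed is that $Sg$ is a well-defined vector of $\hil$, so the statement holds verbatim for any (possibly merely densely defined) operator representing $\tform$ in the sense of \eqref{E:reprezdef}.
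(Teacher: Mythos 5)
Your proof is correct and follows exactly the paper's own argument: fix $g\in\dom S\cap\D$, write $\tform(g,h)=\sip{Sg}{h}$, and apply the Cauchy--Schwarz inequality with $C_g=\|Sg\|^2$ to conclude $g\in\DD$ via \eqref{E:DD}. Your closing observation that neither positivity nor self-adjointness of $S$ is needed is accurate, but the argument itself is identical to the one in the paper.
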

\begin{proof}
If $g\in \dom S\cap \D$ is any vector, then 
\begin{equation*}
    \abs{\tform(g,h)}^2=\abs{\sip{Sg}{h}}^2\leq \|Sg\|^2\|h\|^2
\end{equation*}
for every $h\in\D$, hence $g\in\DD$.
\end{proof}
In the following theorem, we show that any densely defined positive form (without any additional regularity assumptions) admits a representation in a maximal sense; that is, there exists a representing operator $T$ such that $\dom T \cap \D = \DD$. This theorem is the main result of the section.
\begin{theorem}\label{T:represent1}
 Under the assumptions of Lemma \ref{L:L1}, there exists a positive self-adjoint operator $T$ in $\hil$ with the following properties: 
 \begin{enumerate}[label=\textup{(\roman*)}]
     \item $\DD= \dom T\cap \D$,
     \item $T$ is a representing operator of $T$ in the sense that  
 \begin{equation}\label{E:reprez1}
     \tform(g,h)=\sip{Tg}{h} \qquad(g\in \DD,h\in \D),
 \end{equation}
 \item $\D\subseteq \dom T^{1/2}$ and
 \begin{equation}\label{E:T23iii}
     \tform(g,h)=\sip{T^{1/2}g}{T^{1/2}h},\qquad (g\in \DDD,h\in \D),
 \end{equation}
  where 
  \begin{equation}\label{D:DDD}
      \DDD=\set{g\in \D}{\exists (g_n)\subset \D_* \mbox{~s.t.~} g_n\to g \mbox{~in $\hil$ and } \tform(g_n-g,g_n-g)\to0}
  \end{equation}
 \end{enumerate}
\end{theorem}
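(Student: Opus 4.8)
The plan is to realize $T$ as $\overline{Q}^{*}\overline{Q}$, built from the canonical map $Q$ of Lemma \ref{L:L1}. The starting point is that $Q^{*}$ is always a closed operator, and, since $\ran Q$ is dense in $\hilt$, it is injective ($\ker Q^{*}=(\ran Q)^{\perp}=\{0\}$). When $Q$ is closable I would put $\overline{Q}\coloneqq Q^{**}$ and define $T\coloneqq\overline{Q}^{*}\overline{Q}=Q^{*}\overline{Q}$, using $\overline{Q}^{*}=Q^{*}$; von Neumann's theorem then guarantees that $T$ is a positive self-adjoint operator in $\hil$. The genuinely non-closable case, where $\overline{Q}$ is not the graph of an operator, is postponed to the main obstacle below.

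For property (i) I would observe that a vector $g\in\D$ lies in $\dom T$ exactly when $\overline{Q}g=Qg$ belongs to $\dom Q^{*}$, and this is precisely the membership condition defining $\DD=\dom Q^{*}Q$ in \eqref{E:DD}; hence $\dom T\cap\D=\DD$. Property (ii) is then a one-line computation: for $g\in\DD$ and $h\in\D$ the defining relation of the adjoint yields $\sip{Tg}{h}=\sipt{\overline{Q}g}{Qh}=\sipt{Qg}{Qh}=\tform(g,h)$.

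For the square-root statement (iii) I would invoke the polar decomposition $\overline{Q}=V\,T^{1/2}$, where $T^{1/2}=(\overline{Q}^{*}\overline{Q})^{1/2}$ and $V$ is a partial isometry which is isometric on $\overline{\ran T^{1/2}}$. Since $\dom T^{1/2}=\dom\overline{Q}\supseteq\D$, the inclusion $\D\subseteq\dom T^{1/2}$ is immediate. Given $g\in\DDD$, a defining $\tform$-Cauchy sequence $(g_n)\subset\DD$ satisfies $g_n\to g$ in $\hil$ together with $\sipt{Q(g_n-g)}{Q(g_n-g)}\to0$, so closedness of $\overline{Q}$ places $g$ in $\dom\overline{Q}$ and lets me pass to the limit; then for $h\in\D$ one gets $\sip{T^{1/2}g}{T^{1/2}h}=\sipt{\overline{Q}g}{\overline{Q}h}=\tform(g,h)$, the first equality because $T^{1/2}g,T^{1/2}h\in\overline{\ran T^{1/2}}$, the initial space of $V$. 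In the real case I would replace the spectral-theoretic square root by the one from \cite{SZTZS_squareroot}.

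The hard part is the genuinely non-closable situation, in which $\overline{Q}$ is unavailable and the shortcut $T=\overline{Q}^{*}\overline{Q}$ collapses. One must instead manufacture a positive self-adjoint $T$ directly — for instance by passing to the closed injective operator $Q^{*}$ restricted to $\overline{\dom Q^{*}}$ and forming a von Neumann product there — and then re-establish the \emph{exact} domain identity (i). I expect the delicate matching of $\dom T\cap\D$ with $\DD$, together with the precise identification of $\DDD$ as the largest subset of $\D$ on which the second representation persists, to be the crux of the whole argument; once a self-adjoint $T$ with the correct domain is secured, the representation identities (ii) and (iii) are comparatively routine.
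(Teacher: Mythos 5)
Your argument is complete only in the case where $Q$ is closable: there $T:=Q^{*}Q^{**}$ indeed works, and your verifications of (i)--(iii) (the domain identity via $\dom Q^{*}Q$, the adjoint computation for \eqref{E:reprez1}, and polar decomposition plus closedness of $T^{1/2}$ for the limit passage on $\DDD$) are sound and essentially standard. But the theorem assumes nothing beyond $\tform$ being a densely defined nonnegative form --- the non-closable case is its entire point, as the introduction emphasizes --- and you explicitly defer that case to a one-line hint. So the proposal has a genuine gap exactly at its crux. Worse, the hinted repair does not obviously work: if you view $Q^{*}$ as a closed, densely defined operator $R$ from $\mathcal{K}:=\overline{\dom Q^{*}}\subseteq\hilt$ into $\hil$ and form the von Neumann product $T':=RR^{*}$, then for $h\in\D$ one only gets $R^{*}h=PQh$, where $P$ is the orthogonal projection of $\hilt$ onto $\mathcal{K}$. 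Hence a vector $g\in\D$ lies in $\dom T'$ as soon as $PQg\in\dom Q^{*}$, which does not force $Qg\in\dom Q^{*}$ when $(I-P)Qg\neq 0$; and for such $g$ one computes $\sip{T'g}{h}=\sipt{PQg}{PQh}$, which need not equal $\tform(g,h)=\sipt{Qg}{Qh}$. Thus both the representation identity and the equality $\dom T'\cap\D=\DD$ are in jeopardy, and you cannot even fall back on Proposition \ref{P:2.2}, since that proposition presupposes that the operator is a representation in the sense of \eqref{E:reprezdef}. Your own prediction that the ``delicate matching'' of domains is the hard part is correct --- and it is precisely what this construction fails to secure.

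The paper closes the gap by a different, closability-free construction. It equips the subspace $Q^{*}(\DD+\ker\tform)$ of $\hil$ with the inner product $\sipQ{Q^{*}(g+\ker\tform)}{Q^{*}(h+\ker\tform)}:=\tform(g,h)$, completes it to an auxiliary Hilbert space $\hil_{*}$, and considers the natural embedding $J:\hil_{*}\supseteq Q^{*}(\DD+\ker\tform)\to\hil$. A Cauchy--Schwarz estimate shows $\D\subseteq\dom J^{*}$, so $J$ is closable, and a density argument identifies $J^{*}g=Q^{*}(g+\ker\tform)$ for $g\in\DD$; setting $T:=J^{**}J^{*}$ then gives a positive self-adjoint operator for which (ii) is immediate, (i) follows with the reverse inclusion supplied by Proposition \ref{P:2.2}, and (iii) follows from $\dom T^{1/2}=\dom J^{*}\supseteq\D$ together with the same Cauchy-sequence argument you sketched. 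In effect this amounts to restricting $Q^{*}$ to $Q(\DD)$ and closing inside $\overline{Q(\DD)}$ rather than inside $\overline{\dom Q^{*}}$; that finer choice of ambient space is exactly what makes $J^{*}$ agree with $Q^{*}Q$ on $\DD$ and the domain identity (i) come out, and it is the key idea your proposal is missing.
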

\begin{proof}
    Consider the vector space 
    \begin{equation*}
        Q^*(\DD+\ker \tform)\coloneqq\set{Q^*(g+\ker\tform)}{g\in \DD}
    \end{equation*}
    equipped with the following inner product 
    \begin{equation*}
        \sipQ{Q^*(g+\ker\tform)}{Q^*(h+\ker\tform)}\coloneqq \tform(g,h),\qquad g,f\in \DD.
    \end{equation*}
    Note that $\sipQ\cdot\cdot$ is indeed a well defined inner product on $Q^*(\DD+\ker \tform)$ since $\tform(g,g)=0$ implies $Q^*(g+\ker\tform)=0$. Let us denote by $\hil_*$ the Hilbert completion of that pre-Hilbert space so obtained, so that $Q^*(\DD+\ker \tform)\subseteq \hil_{*}$ forms a dense linear subspace in with respect to the norm induced by $\sipQ\cdot\cdot$. 

    Consider now the canonical  embedding operator $J$ of $\hil_*\supseteq Q^*(\DD+\ker \tform)$ into $\hil$, defined by the identification
    \begin{equation}
        JQ^*(g+\ker \tform)\coloneqq Q^*(g+\ker \tform)\qquad (g\in \DD).
    \end{equation}
    We are going to show first that $J:\hil_*\to\hil$ is closable, namely, the domain of its adjoint $J^*$ contains the dense subspace $\D\subseteq \hil$:
    \begin{equation}\label{E:DsubdomJ*}
        \D\subseteq \dom J^*.
    \end{equation}
    Indeed, consider a vector $h\in\D$, then for every $g\in\DD$ we have 
    \begin{align*}
        \abs{\sip{ JQ^*(g+\ker \tform)}{ h}}^2&=\abs{\sip{ Q^*Qg}{ h}}^2=\abs{\sipt{ Qg}{ Qh}}^2\\&=\abs{\tform(g,h)}^2\leq \tform(h,h)\tform(g,g)\\
        &=\tform(h,h)\cdot \sipQ{ Q^*(g+\ker \tform)}{Q^*(g+\ker \tform)}, 
    \end{align*}
    according to the Cauchy-Schwarz inequality. Hence $h\in\dom J^*$, as stated. 

    Next we claim that 
    \begin{equation}\label{E:J*g}
        J^*g= Q^*(g+\ker \tform),\qquad (g\in \DD).
    \end{equation}
    To this aim, consider a vector $g\in\DD$, then
    \begin{align*}
        \sipQ{J^*g}{Q^*(h+\ker \tform)}&=\sip{g}{JQ^*(h+\ker \tform)}=\sip{g}{Q^*(h+\ker \tform)}\\
        &=\sipt{Qg}{h+\ker \tform}=\sipt{h+\ker \tform}{h+\ker \tform}\\
        &=\tform(g,h)=\sipQ{Q^*(g+\ker \tform)}{Q^*(h+\ker \tform)}
    \end{align*}
    for every $h\in\DD$. Thus $J^*g-Q^*(g+\ker \tform)$ is orthogonal to the dense set $Q^*(\DD+\ker\tform)$ in $\hil_*$ which in turn implies \eqref{E:J*g}. 

   Finally, we show that the positive self-adjoint $T\coloneqq J^{**}J^*$ has the properties listed in the theorem. On the first hand, by \eqref{E:J*g}, for every $g\in\DD$ we have 
   \begin{equation*}
   J^*g= Q^*(g+\ker \tform)\in \dom J\subseteq \dom J^{**},    
   \end{equation*}
   hence $g\in \dom J^{**}J^*$. On the  other hand,
\begin{equation*}
    J^{**}J^{*}g=J^{**}Q^*(g+\ker \tform)=Q^*(g+\ker \tform),
\end{equation*}
thus 
\begin{equation*}
    \sip{J^{**}J^{*}g}{h}=\sip{Q^*(g+\ker \tform)}{h}=\sipt{g+\ker \tform}{h+\ker \tform}=\tform(g,h),
\end{equation*}
for every $g\in \DD$ and $h\in \D$, that proves \eqref{E:reprez1}. Consequently, $\dom J^{**}J^*\cap \D\subseteq \DD$ by Proposition \ref{P:2.2}, thus $\dom J^{**}J^*\cap \D\subseteq \DD$.

Finally, to prove (iii) observe first that
\begin{equation*}
    \D\subseteq \dom J^*=\dom (J^{**}J^*)^{1/2}
\end{equation*}
according to \eqref{E:DsubdomJ*}. Let now $g\in\DDD$ and choose a sequence $(g_n)$ in $\DD$ such that $g_n\to g$ in $\hil$ and that $\tform(g_n-g,g_n-g)\to0$. Then clearly $\tform(g_n-g_m,g_n-g_m)\to0$ as $n,m\to\infty$, hence also
\begin{equation*}
    \|T^{1/2}(g_n-g_m)\|^2=\sip{T(g_n-g_m)}{g_n-g_m}=\tform(g_n-g_m,g_n-g_m)\to0,
\end{equation*}
which yields $T^{1/2}g_n\to T^{1/2}g$ by closedness. Consequently,
\begin{equation*}
    t(g_n,h)=\sip{Tg_n}{h}=\sip{T^{1/2}g_n}{T^{1/2}h}\to\sip{T^{1/2}g}{T^{1/2}h}
\end{equation*}
and
\begin{equation*}
    \abs{\tform(g_n,h)-\tform(g,h)}^2=\abs{\tform(g_n-g,h)}^2\leq \tform(g_n-g,g_n-g)\tform(h,h)\to0
\end{equation*}
on the other hand, which together yield \eqref{E:T23iii}.
\end{proof}
The operator $T = J^{**}J^*$ constructed in the above theorem is extremal in the sense that it represents the form $\tform$ on the largest possible set. In what follows, we also show that it is extremal with respect to the natural ordering among all representations of $\tform$. 
To this end, we provide an explicit formula for the quadratic form associated with $T$, as follows:

\begin{lemma}\label{L:L2}
    Let $\tform$ be a positive symmetric form on $\D$ as in the preceding theorem and let $T$ denote the representing operator of $\tform$ constructed in the proof of Theorem \ref{T:represent1}. Then 
\begin{equation}\label{E:domT}
    \dom T^{1/2}=\set[\Big]{h\in \hil}{\sup\set{\abs{\sip{h}{Q^*Qg}}^2}{g\in \DD, \tform(g,g)\leq1}<+\infty},
\end{equation}
    and
    \begin{equation}\label{E:T1/2h}
        \|T^{1/2}h\|^2 = \sup\set{\abs{\sip{h}{Q^*Qg}}^2}{g\in \DD, \tform(g,g)\leq1}\qquad (h\in \dom T^{1/2}).
    \end{equation}
\end{lemma}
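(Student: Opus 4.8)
The plan is to exploit the factorization $T = J^{**}J^*$ obtained in the proof of Theorem~\ref{T:represent1}. Writing $A := J^*$, the operator $A$ is closed (being an adjoint) and densely defined, since $\dom J^* \supseteq \D$ is dense by \eqref{E:DsubdomJ*}; consequently $T = A^*A$ with $A^* = J^{**}$. For such an operator the modulus identity applies: $\dom T^{1/2} = \dom A = \dom J^*$, and $\|T^{1/2}h\| = \|J^*h\|_*$ for every $h \in \dom J^*$, where $\|\cdot\|_*$ denotes the norm of $\hil_*$. (This identity is available over real Hilbert spaces as well, cf. \cite{SZTZS_squareroot}.) Hence both claimed statements reduce to computing $\|J^*h\|_*$ as the supremum appearing on the right-hand side of \eqref{E:T1/2h}.

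To carry this out I would test $J^*h$ against the dense subspace $Q^*(\DD+\ker\tform)$ of $\hil_*$. For $g \in \DD$ set $\xi_g := Q^*(g+\ker\tform)$; then by the definition of $\sipQ{\cdot}{\cdot}$ one has $\|\xi_g\|_*^2 = \tform(g,g)$, so that $\|\xi_g\|_* \leq 1$ is equivalent to $\tform(g,g) \leq 1$. Moreover, using $J\xi_g = Q^*Qg$ together with \eqref{E:J*g}, we get $\sipQ{J^*h}{\xi_g} = \sip{h}{J\xi_g} = \sip{h}{Q^*Qg}$. Since $\{\xi_g : g \in \DD\}$ is dense in $\hil_*$, the norm $\|J^*h\|_*$ equals the supremum of $\abs{\sipQ{J^*h}{\xi_g}}$ subject to $\|\xi_g\|_* \leq 1$, which after squaring is exactly $\sup\set{\abs{\sip{h}{Q^*Qg}}^2}{g\in\DD, \tform(g,g)\leq1}$. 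This yields \eqref{E:T1/2h} for $h \in \dom T^{1/2}$ and, in particular, shows that the supremum is finite on $\dom T^{1/2}$.

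For the domain equality \eqref{E:domT} only the nontrivial inclusion then remains: if the supremum equals some finite $M^2$, I would deduce $h \in \dom J^*$. Indeed, the estimate $\abs{\sip{h}{Q^*Qg}} \leq M\,\tform(g,g)^{1/2} = M\|\xi_g\|_*$ says precisely that the linear functional $\xi \mapsto \sip{h}{J\xi}$ is bounded (by $M$) on the dense domain $\dom J = Q^*(\DD+\ker\tform)$ of $J$; by the definition of the adjoint this forces $h \in \dom J^* = \dom T^{1/2}$. Combining this with the reverse inclusion established in the previous step gives \eqref{E:domT}.

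The one genuinely nontrivial ingredient here is the modulus identity $\dom T^{1/2} = \dom J^*$ together with $\|T^{1/2}h\| = \|J^*h\|_*$; everything else is a direct density-and-duality computation on $\hil_*$. The care needed at that point is to invoke the positive self-adjoint square root of $A^*A$ in a manner valid over both real and complex Hilbert spaces, which is why I rely on the existence and characterization of $(A^*A)^{1/2}$ from the cited square-root construction rather than on the spectral theorem.
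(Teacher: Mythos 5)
Your proposal is correct and takes essentially the same route as the paper: both rest on the factorization $T=J^{**}J^*$ to get $\dom T^{1/2}=\dom J^*$ and $\|T^{1/2}h\|=\|J^*h\|_*$, and then evaluate $\|J^*h\|_*$ by testing against the dense subspace $Q^*(\DD+\ker\tform)$ of $\hil_*$, using $\|Q^*(g+\ker\tform)\|_*^2=\tform(g,g)$. Your explicit bounded-functional argument for the inclusion of the supremum set into $\dom J^*$ is exactly the content of the paper's characterization of $\dom J^*$, just spelled out in more detail.
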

\begin{proof}
    Recall from the proof of Theorem \ref{T:represent1} that we have $T=J^{**}J^*$, whence
    \begin{align*}
        \dom T^{1/2}&=\dom J^*\\&
        =\set[\Big]{h\in \hil}{\sup\set{\abs{\sip{h}{JQ^*(g+\ker\tform )}}^2}{g\in \DD, \sip{Q^*Qg}{g}\leq1}<+\infty}\\
        &=\set[\Big]{h\in \hil}{\sup\set{\abs{\sip{h}{Q^*Qg}}^2}{g\in \DD, \tform(g,g)\leq1}<+\infty}.
    \end{align*} 
    Using the density of $ Q^*(\DD+\ker \tform)$ in $\hil_*$, we obtain that for every $h\in\dom T^{1/2}$ 
    \begin{align*}
        \|T^{1/2}h\|^2=\|J^*h\|^2_*&=\sup\set{\abs{\sipQ{J^*h}{Q^*(g+\ker\tform )}}^2}{g\in \DD, \sip{Q^*Qg}{g}\leq1}\\
        &=\sup\set{\abs{\sip{h}{Q^*Qg}}^2}{g\in \DD, \tform(g,g)\leq1},
    \end{align*}
    which proves the statements of the lemma.
\end{proof}
Before stating our next theorem, let us recall the form order `$\preceq$' on the set of positive self-adjoint operators: we write $T \preceq S$ if $\dom S^{1/2} \subseteq \dom T^{1/2}$ and
\begin{equation*}
    \|T^{1/2}h\|^2 \leq \|S^{1/2}h\|^2 \qquad \text{for all } h \in \dom S^{1/2}.
\end{equation*}
For properties of the order $\preceq$, see e.g.\ \cite{Schmudgen}.
\begin{theorem}
    Let $\tform$ be a densely defined positive symmetric form in $\hil$. Let $S$ be any representing operator of $\tform$ in the sense of \eqref{E:reprezdef} such that 
    \begin{equation}\label{E:ScapD}
        \DD\subseteq \dom S\cap \D.
    \end{equation}
    Then $\dom S^{1/2}\subseteq \dom (J^{**}J^*)^{1/2}$ and 
    \begin{equation*}
        \|S^{1/2}h\|^2\geq \|(J^{**}J^*)^{1/2}h\|^2\qquad (h\in \dom S^{1/2}).
    \end{equation*}
    In other words, $J^{**}J^*\preceq S$ holds for every positive self-adjoint representation $S$ of the form $\tform$.
\end{theorem}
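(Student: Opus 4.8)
The plan is to exploit the variational formula for $\|(J^{**}J^*)^{1/2}h\|^2$ furnished by Lemma \ref{L:L2}, which expresses both the domain of $(J^{**}J^*)^{1/2}$ and the norm $\|(J^{**}J^*)^{1/2}h\|$ purely in terms of the quantities $\sip{h}{Q^*Qg}$ with $g\in\DD$, $\tform(g,g)\leq1$. Thus the whole theorem reduces to the single estimate $\abs{\sip{h}{Q^*Qg}}^2\leq \|S^{1/2}h\|^2$ for every such $g$ and every $h\in\dom S^{1/2}$: finiteness of the resulting supremum yields the domain inclusion $\dom S^{1/2}\subseteq \dom (J^{**}J^*)^{1/2}$, while the bound itself yields the norm inequality, which is precisely $J^{**}J^*\preceq S$.

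First I would record that $S$ and $T=J^{**}J^*$ coincide on $\DD$. Indeed, fix $g\in\DD$; by hypothesis \eqref{E:ScapD} we have $g\in\dom S\cap\D$, so both $\sip{Sg}{h'}=\tform(g,h')$ and $\sip{Q^*Qg}{h'}=\tform(g,h')$ hold for every $h'\in\D$ by \eqref{E:reprezdef} and \eqref{E:QQgh} respectively. Subtracting and using the density of $\D$ in $\hil$ gives $Sg=Q^*Qg$ for all $g\in\DD$.

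Next, I would run the key estimate. Let $h\in\dom S^{1/2}$ and let $g\in\DD$ satisfy $\tform(g,g)\leq1$. Since $g\in\dom S\subseteq\dom S^{1/2}$ and $S^{1/2}$ is self-adjoint, the factorization $Sg=S^{1/2}(S^{1/2}g)$ together with the previous step gives $\sip{h}{Q^*Qg}=\sip{h}{Sg}=\sip{S^{1/2}h}{S^{1/2}g}$. Applying the Cauchy--Schwarz inequality and noting that $\|S^{1/2}g\|^2=\sip{Sg}{g}=\sip{Q^*Qg}{g}=\tform(g,g)\leq1$, I obtain $\abs{\sip{h}{Q^*Qg}}^2\leq\|S^{1/2}h\|^2\,\|S^{1/2}g\|^2\leq\|S^{1/2}h\|^2$, uniformly in $g$.

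Taking the supremum over all admissible $g$ then finishes the argument: the supremum in Lemma \ref{L:L2} is bounded by $\|S^{1/2}h\|^2<+\infty$, so $h\in\dom (J^{**}J^*)^{1/2}$ and $\|(J^{**}J^*)^{1/2}h\|^2\leq\|S^{1/2}h\|^2$. I do not anticipate any serious obstacle; the only points requiring care are the identification $Sg=Q^*Qg$ on $\DD$ (which rests on the density of $\D$) and the self-adjoint square-root factorization $\sip{h}{Sg}=\sip{S^{1/2}h}{S^{1/2}g}$, both of which are standard once the domains are tracked correctly.
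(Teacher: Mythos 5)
Your proof is correct and follows essentially the same route as the paper: the identification $Sg=Q^*Qg$ on $\DD$ via density of $\D$, the Cauchy--Schwarz estimate $\abs{\sip{h}{Q^*Qg}}^2\leq\|S^{1/2}h\|^2\,\tform(g,g)$, and the conclusion via Lemma \ref{L:L2}. The only difference is cosmetic: where the paper re-derives the bound $\|S^{1/2}h\|^2\geq\sup\set{\abs{\sip{h}{Q^*Qg}}^2}{g\in\DD,\ \tform(g,g)\leq1}$ through a core argument for $S^{1/2}$, you invoke the norm equality \eqref{E:T1/2h} of Lemma \ref{L:L2} directly, a slightly shorter but equivalent finish.
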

\begin{proof} 
Let $S$ be a positive self-adjoint operator satisfying \eqref{E:reprezdef} and \eqref{E:ScapD}. First observe that 
\begin{equation*}
    \sip{Sg}{h}=\tform(g,h)=\sip{Q^*Qg}{h},\qquad g\in\DD, h\in \D,
\end{equation*}
which implies $Sg=Q^*Qg$ for every $g\in\D_*$. 

Let now $h\in\dom S^{1/2}$, then one has 
\begin{align*}
    \abs{\sip{h}{Q^*Qg}}^2&=\abs{\sip{h}{Sg}}^2=\abs{\sip{S^{1/2}h}{S^{1/2}g}}^2\\ 
    & \leq \|S^{1/2}h\|^2\|S^{1/2}g\|^2=\|S^{1/2}h\|^2\cdot \tform(g,g)
\end{align*}
for every $g\in\DD$. Hence $h\in \dom T^{1/2}$ according to Lemma \ref{L:L2}. Note on the other hand, that $\dom S$ is a core for  $S^{1/2}$, hence for every $h\in \dom S^{1/2}$ one obtains that 
\begin{align*}
    \|S^{1/2}h\|^2&=\sup\set{\abs{\sip{S^{1/2}h}{S^{1/2}f}}^2}{f\in\dom S, \sip{Sf}{f}\leq 1}\\
    &\geq \sup\set{\abs{\sip{h}{Sg}}^2}{g\in\DD, \sip{Sg}{g}\leq 1}\\
    &=\sup\set{\abs{\sip{h}{Q^*Qg}}^2}{g\in\DD, \tform(g,g)\leq 1}\\
    &=\|T^{1/2}h\|^2,
\end{align*}
which proves the statement of the theorem.
\end{proof}
\begin{corollary}
    Let $\tform$ be a positive symmetric form in the real or complex Hilbert space $\hil$ with dense domain $\D$.
    Assume that for every $h\in\D$ there exists a sequence $\seq g$ in $\DD$ such that $g_n\to h$ and $\tform(h-g_n,h-g_n)\to 0$, that is, 
     \begin{equation}\label{E:DDD=D}
    \DDD=\D.     
     \end{equation}
     Then there exists a positive self-adjoint operator $T$ in $\hil$ that fulfills the following properties:
        \begin{enumerate}[label=\textup{(\alph*)}]
            \item $\DD\subseteq \dom T$ and $\tform(g,h)=\sip{Tg}{h}$ for $g\in\D\cap\dom T$ and $h\in\D$,
            \item $\D\subseteq \dom T^{1/2}$ and $\tform(g,h)=\sip{T^{1/2}g}{T^{1/2}h}$ for $g,h\in\D$.
        \end{enumerate}
\end{corollary}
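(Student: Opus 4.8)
The plan is to reuse the operator $T = J^{**}J^*$ constructed in the proof of Theorem \ref{T:represent1}; under the standing hypothesis $\DDD = \D$ its three properties (i)--(iii) collapse directly onto the two claims (a) and (b), so no new construction is required.

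First I would read off (a). By Theorem \ref{T:represent1}(i) we have $\dom T \cap \D = \DD$, so in particular $\DD \subseteq \dom T$, which is the first half of (a). Moreover the admissible set of vectors $g$ appearing in (a), namely $\D \cap \dom T$, coincides with $\DD$ by that same item; hence the identity $\tform(g,h) = \sip{Tg}{h}$ for $g \in \D \cap \dom T$ and $h \in \D$ is nothing but the content of Theorem \ref{T:represent1}(ii).

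Next I would obtain (b). The inclusion $\D \subseteq \dom T^{1/2}$ is already the first assertion of Theorem \ref{T:represent1}(iii), while its second assertion gives $\tform(g,h) = \sip{T^{1/2}g}{T^{1/2}h}$ for every $g \in \DDD$ and $h \in \D$. Invoking the hypothesis $\DDD = \D$ replaces the restriction $g \in \DDD$ by $g \in \D$, so this identity now holds for all $g, h \in \D$, which is exactly (b).

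There is no genuine obstacle to overcome: the role of the approximation condition $\DDD = \D$ is precisely to upgrade the partial second-representation identity of Theorem \ref{T:represent1}(iii)---valid a priori only on the form-closure $\DDD$ of $\DD$---into the full Kato second representation theorem on all of $\D$. The only point meriting a moment's care is the domain bookkeeping, i.e.\ checking that $\D \cap \dom T = \DD$ so that the hypotheses on $g$ in (a) and in Theorem \ref{T:represent1}(ii) genuinely match; this is immediate from item (i).
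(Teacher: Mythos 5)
Your proposal is correct and takes essentially the same route as the paper: both reuse the operator $T=J^{**}J^*$ from Theorem \ref{T:represent1} and use the hypothesis $\DDD=\D$ to upgrade the identity \eqref{E:T23iii} from $\DDD$ to all of $\D$. The only cosmetic difference is that the paper obtains (a) as a consequence of (b), via $\tform(g,h)=\sip{T^{1/2}g}{T^{1/2}h}=\sip{Tg}{h}$ for $g\in\dom T\cap\D$, whereas you read (a) off directly from items (i) and (ii) of the theorem; both steps are immediate.
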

\begin{proof}
    Let $T$ denote the positive self-adjoint operator satisfying properties (i)-(iii) of Theorem \ref{T:represent1}. We show that $T$ possesses the desired properties. Indeed, we have $\D=\DDD\subseteq\dom T^{1/2}$ by \eqref{E:DDD=D} and thus 
    \begin{equation*}
        \tform(g,h)=\sip{T^{1/2}g}{T^{1/2}h}\qquad (h,g\in\D)
    \end{equation*}
    according to \eqref{E:T23iii}. This in particular yields   
    \begin{equation*}
        \tform(g,h)=\sip{T^{1/2}g}{T^{1/2}h}=\sip{Tg}{h}
    \end{equation*}
    for $g\in\dom T\cap \D$ and $h\in\D$.
\end{proof}

Before stating Kato's representation theorems, we recall the definition of a closed form. A positive symmetric form $\tform$ (with domain $\D$) is said to be closed if and only if for every sequence $\seq g$ in $\D$, the conditions $g_n \to g$ and $\tform(g_n - g_m, g_n - g_m) \to 0$ as $n,m \to +\infty$ imply that $g \in \D$ and $\tform(g_n - g, g_n - g) \to 0$.

As an immediate consequence of Theorem~\ref{T:represent1}, we recover Kato's first and second representation theorems for densely defined, closed, positive symmetric forms:
\begin{corollary}
    Let $\tform$ be a densely defined closed form in the real or complex Hilbert space $\hil$. Then there exists a unique positive self-adjoint operator $T$ in $\hil$ satisfying the following properties 
     \begin{enumerate}[label=\textup{(\alph*)}]
            \item $\dom T\subseteq \D$ and $\tform(g,h)=\sip{Tg}{h}$ for $g\in\dom T$ and $h\in\D$,
            \item $\D=\dom T^{1/2}$ and $\tform(g,h)=\sip{T^{1/2}g}{T^{1/2}h}$ for $g,h\in\D$.
        \end{enumerate}
\end{corollary}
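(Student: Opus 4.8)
The plan is to identify the operator $T$ produced by Theorem~\ref{T:represent1} with $Q^*Q$ and then to read off (a) and (b) from the standard square-root theory of such operators; the whole point of the closedness hypothesis is that it makes this identification legitimate.

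First I would record that $\tform$ is closed precisely when the canonical map $Q$ of \eqref{E:Q}, viewed as an operator from $\hil$ into $\hilt$ with $\dom Q=\D$, is a \emph{closed} operator. Indeed, a sequence $(Qg_n)$ converges in $\hilt$ if and only if it is Cauchy there, i.e.\ $\tform(g_n-g_m,g_n-g_m)\to0$; combined with $g_n\to g$ in $\hil$, the definition of a closed form yields $g\in\D$ and $Qg_n\to Qg$, which is exactly the closed-graph condition for $Q$. (The converse implication is immediate by reversing this reasoning.)

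Since $Q$ is now densely defined and closed, $Q^*Q$ is a positive self-adjoint operator with $\dom Q^*Q=\DD$ by \eqref{E:DD}. Recalling from the proof of Theorem~\ref{T:represent1} that $Tg=J^{**}J^*g=Q^*Qg$ for every $g\in\DD$, we obtain $Q^*Q\subseteq T$; as both are self-adjoint this forces $T=Q^*Q$. The polar-decomposition identities for a closed operator then give $\dom T^{1/2}=\dom(Q^*Q)^{1/2}=\dom Q=\D$ and $\|T^{1/2}g\|^2=\sipt{Qg}{Qg}=\tform(g,g)$ for $g\in\D$, whence by polarization $\sip{T^{1/2}g}{T^{1/2}h}=\sipt{Qg}{Qh}=\tform(g,h)$ for all $g,h\in\D$; this is (b), including the sharpened equality $\D=\dom T^{1/2}$. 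For (a) I note $\dom T=\dom Q^*Q\subseteq\dom Q=\D$ and, by \eqref{E:reprez1}, $\sip{Tg}{h}=\sip{Q^*Qg}{h}=\tform(g,h)$ for $g\in\dom T$ and $h\in\D$. Uniqueness follows because any positive self-adjoint $S$ satisfying (a)--(b) has $\dom S^{1/2}=\D=\dom T^{1/2}$ and $\sip{S^{1/2}g}{S^{1/2}h}=\tform(g,h)=\sip{T^{1/2}g}{T^{1/2}h}$ on this common domain; a positive self-adjoint operator is determined by its closed quadratic form, so $S^{1/2}=T^{1/2}$ and hence $S=T$.

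I expect the crux to be the identity $\dom(Q^*Q)^{1/2}=\dom Q$ underlying the upgrade from the inclusions of Theorem~\ref{T:represent1} to the equalities in (a)--(b): this is the one place where closedness is genuinely needed, and over a \emph{real} Hilbert space it must be justified through the real-case construction of the square root mentioned in the introduction rather than through the spectral theorem.
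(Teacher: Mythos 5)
Your proof is correct and takes essentially the same route as the paper's: closedness of $\tform$ makes the canonical map $Q$ a closed operator, so $T=Q^*Q$ is positive self-adjoint with $\dom T=\DD\subseteq\D$, and the standard identity $\dom (Q^*Q)^{1/2}=\dom Q=\D$ together with $\sip{T^{1/2}g}{T^{1/2}h}=\sipt{Qg}{Qh}$ gives (b). You actually go a bit beyond the paper, whose proof does not address the uniqueness asserted in the statement; your argument that a positive self-adjoint operator is determined by its closed quadratic form correctly supplies that missing step.
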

\begin{proof}
    Note that the closedness of the form $\tform$ implies that the canonical operator $Q:\hil\to\hilt$ is closed. Consequently, the positive operator $T=Q^*Q$ is self-adjoint  with domain $\dom T=\DD\subseteq \D$. Hence property (a) is immediate from \eqref{E:QQgh}. Furthermore we have that  $\dom (Q^*Q)^{1/2}=\dom Q=\D$, hence
    \begin{equation*}
        \sip{T^{1/2}g}{T^{1/2}h}=\sipt{Qg}{Qh}=\tform(g,h),\qquad g,h\in\D,
    \end{equation*}
    which proves statement (b).
\end{proof}
Recall that a positive symmetric form $\tform$ is said to be \emph{closable} if, for every sequence $\seq g$ in $\D$, conditions $g_n \to 0$ and $\tform(g_n - g_m, g_n - g_m) \to 0$ as $n, m \to +\infty$ imply that $\tform(g_n, g_n) \to 0$.

For closable forms, the following representation theorem holds:
\begin{corollary}
    Let $\tform$ be a nonnegative symmetric closable form in the dense subspace $\D\subseteq \hil$. Then the canonical embedding $Q:\hil\supset \D\to\hilt$ is closable and $Q^{*}Q^{**}$ is  a representation of $\tform$: 
    \begin{enumerate}[label=\textup{(\alph*)}]
        \item $\tform(g,h)=\sip{Q^*Q^{**}g}{h}$ for $g\in\DD$ and $h\in \D$,
        \item $\tform(g,h)=\sip{(Q^*Q^{**})^{1/2}g}{(Q^*Q^{**})^{1/2}h}$ for $g,h\in\D$.
    \end{enumerate}
\end{corollary}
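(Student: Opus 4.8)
The plan is to observe that, for closable $\tform$, the canonical map $Q$ is closable in the operator sense, and then to read off (a) and (b) from von Neumann's theorem applied to the closed operator $Q^{**}$, together with Lemma~\ref{L:L1}.

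First I would translate the form-theoretic closability hypothesis into closability of $Q$. By the very definition of the inner product on $\hilt$ one has, for any sequence $\seq g$ in $\D$,
\[
\|Q(g_n-g_m)\|_{\tform}^2=\tform(g_n-g_m,g_n-g_m),\qquad \|Qg_n\|_{\tform}^2=\tform(g_n,g_n).
\]
Hence $\seq{Qg}$ is Cauchy in the \emph{complete} space $\hilt$ precisely when $\tform(g_n-g_m,g_n-g_m)\to0$, in which case $Qg_n\to y$ for some $y\in\hilt$ with $\|y\|_{\tform}^2=\lim_n\tform(g_n,g_n)$. If moreover $g_n\to0$ in $\hil$, closability of $\tform$ gives $\tform(g_n,g_n)\to0$, so $y=0$. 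This is exactly the statement that the graph-closure of $Q$ contains no pair $(0,y)$ with $y\neq0$, i.e. that $Q$ is closable (the converse implication holds by the same computation, giving the equivalence). Thus $Q^{**}=\overline{Q}$ is a closed, densely defined operator with $\D\subseteq\dom Q^{**}$, and $Q^*=(Q^{**})^*$.

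Next I would invoke von Neumann's theorem for the closed densely defined operator $A:=Q^{**}\colon\hil\to\hilt$: the operator $Q^*Q^{**}=(Q^{**})^*Q^{**}=A^*A$ is positive and self-adjoint on $\hil$, it satisfies $\dom(Q^*Q^{**})^{1/2}=\dom Q^{**}\supseteq\D$, and
\[
\|(Q^*Q^{**})^{1/2}x\|^2=\|Q^{**}x\|_{\tform}^2\qquad(x\in\dom Q^{**}).
\]
This is the analogue, for $Q^{**}$, of the self-adjointness facts the paper already uses for $J^{**}J^*$, so no new machinery is needed.

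Finally I would extract (a) and (b). For (a): every $g\in\DD=\dom Q^*Q$ lies in $\dom Q^*Q^{**}$, since $g\in\D\subseteq\dom Q^{**}$ with $Q^{**}g=Qg\in\dom Q^*$, and there $Q^*Q^{**}g=Q^*Qg$; thus (a) is merely \eqref{E:QQgh} of Lemma~\ref{L:L1} rewritten with $Q^*Q^{**}$ in place of $Q^*Q$. For (b): polarizing the norm identity of the previous step yields $\sip{(Q^*Q^{**})^{1/2}g}{(Q^*Q^{**})^{1/2}h}=\sipt{Q^{**}g}{Q^{**}h}$ for $g,h\in\dom Q^{**}$; restricting to $g,h\in\D$, where $Q^{**}g=Qg$ and $Q^{**}h=Qh$, the right-hand side becomes $\sipt{Qg}{Qh}=\tform(g,h)$. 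The only genuinely delicate point is the first step---matching form-closability with operator-closability via the completeness of $\hilt$; once that identification is made, the remaining assertions are immediate from von Neumann's theorem, polarization, and Lemma~\ref{L:L1}.
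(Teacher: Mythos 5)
Your proposal is correct and follows essentially the same route as the paper's own (very terse) proof: you identify closability of $\tform$ with closability of the canonical map $Q$, then obtain the self-adjoint operator $Q^*Q^{**}$ via von Neumann's theorem and read off (a) from Lemma~\ref{L:L1} and (b) from $\dom(Q^*Q^{**})^{1/2}=\dom Q^{**}\supseteq\D$ together with polarization. The only difference is that you spell out the steps the paper dismisses as ``clear''---the Cauchy-sequence argument in the complete space $\hilt$ and the norm identity $\|(Q^*Q^{**})^{1/2}x\|=\|Q^{**}x\|$---so this is a faithful elaboration rather than a different method.
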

\begin{proof}
    Clearly, $\tform$ is closable if and only if the canonical embedding $Q$ in \eqref{E:Q} is a closable operator,  with closure $Q^{**}$. Then $Q^*Q^{**}$ is a positive self-adjoint operator in $\hil$ such that 
    \begin{equation*}
        \D\subseteq \dom Q^{**}=\dom (Q^{**}Q^*)^{1/2}
    \end{equation*}
    It is then clear that $Q^*Q^{**}$ satisfies (a) and (b).
\end{proof}

Note that if $\tform$  a closable, then the form 
\begin{equation*}
    \bar{\tform}(h,k)\coloneqq \sip{(Q^*Q^{**})^{1/2}g}{(Q^*Q^{**})^{1/2}h}\qquad (h,k\in\dom Q^{**})
\end{equation*}
is equal to the closure of $\tform$.

\begin{theorem}\label{T:2.7} Let $\tform$ be a densely defined non-negative form in the real or complex Hilbert space $\hil$, with a dense domain $\D$. Let $T$ be a positive self-adjoint representation of $\tform$  with domain $\dom T=\DD$, such that
\begin{equation*}
    \tform(g,h)=\sip{Tg}{h},\qquad (g\in\dom T,h\in \D).
\end{equation*}
Then the following statements are equivalent:
\begin{enumerate}[label=\textup{(\roman*)}]
    \item $\dom T^{1/2}=\D$ and $\tform(g,h)=\sip{T^{1/2}g}{T^{1/2}h}$ for $g,h\in\D$,
    \item  the following two conditions are fulfilled: 
    \begin{enumerate}[label=\textup{(\alph*)}]
            \item for each $g\in\D$ there exists a  sequence $\seq g\subset \DD$, such that $g_n\to g$ and $\tform(g_n-g,g_n-g)\to0$,
            \item for each sequence $\seq g\subset \DD$, such that $g_n\to g$ and $\tform(g_n-g_m,g_n-g_m)\to0$, it follows that $g\in\D$.
        \end{enumerate}
\end{enumerate}
\end{theorem}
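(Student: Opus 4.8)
The plan is to read condition (ii)(a) as the assertion $\DDD=\D$ and condition (ii)(b) as a closedness requirement, and to run the entire argument directly from the defining properties of the given $T$ (without identifying it with the operator $J^{**}J^*$ of Theorem~\ref{T:represent1}, so that part (iii) of that theorem is not needed). Two structural facts about $T$ do all the work: that $T^{1/2}$ is a closed operator, and that its domain $\dom T=\DD$ is a \emph{core} for $T^{1/2}$. For the core property I would invoke von Neumann's theorem (the domain of $B^*B$ is a core for $B$) applied to the closed operator $B=T^{1/2}$, so that $\dom((T^{1/2})^*T^{1/2})=\dom T$ is dense in $\dom T^{1/2}$ in the graph norm; I prefer this route to the spectral calculus since it stays valid over real Hilbert spaces. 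The one identity I would record at the outset is that for $v\in\DD=\dom T$ one has $\tform(v,v)=\sip{Tv}{v}=\|T^{1/2}v\|^2$, and more generally $\tform(g,h)=\sip{T^{1/2}g}{T^{1/2}h}$ for $g,h\in\DD$; this is what converts form convergence into convergence of $T^{1/2}$-images and back.

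For $(i)\Rightarrow(ii)$ I would argue as follows. Assuming $\dom T^{1/2}=\D$ with $\tform(g,h)=\sip{T^{1/2}g}{T^{1/2}h}$ on $\D$, condition (a) is immediate from the core property: for $g\in\D=\dom T^{1/2}$ choose $\seq g\subset\DD$ with $g_n\to g$ and $T^{1/2}g_n\to T^{1/2}g$, whence $\tform(g_n-g,g_n-g)=\|T^{1/2}(g_n-g)\|^2\to0$. For (b) I start from $\seq g\subset\DD$ with $g_n\to g$ and $\tform(g_n-g_m,g_n-g_m)\to0$; since $g_n-g_m\in\DD$ the form value equals $\|T^{1/2}(g_n-g_m)\|^2$, so $(T^{1/2}g_n)$ is Cauchy and converges, and closedness of $T^{1/2}$ together with $g_n\to g$ forces $g\in\dom T^{1/2}=\D$.

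For $(ii)\Rightarrow(i)$ I would establish the three ingredients of (i) separately. First, $\D\subseteq\dom T^{1/2}$: given $g\in\D$, condition (a) supplies $\seq g\subset\DD$ with $g_n\to g$ and $\tform(g_n-g,g_n-g)\to0$; the seminorm triangle inequality makes $(g_n)$ Cauchy for $\tform$, hence $(T^{1/2}g_n)$ Cauchy, and closedness gives $g\in\dom T^{1/2}$ with $T^{1/2}g_n\to T^{1/2}g$. Second, the identity $\tform(g,h)=\sip{T^{1/2}g}{T^{1/2}h}$ on $\D$: approximate both $g$ and $h$ by such $\DD$-sequences, use $\tform(g_n,h_n)=\sip{T^{1/2}g_n}{T^{1/2}h_n}$, and pass to the limit, the right-hand side converging by continuity of the inner product and the left-hand side to $\tform(g,h)$ via Cauchy--Schwarz for the seminorm together with boundedness of $\tform(h_n,h_n)$. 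Third, $\dom T^{1/2}\subseteq\D$: for $h\in\dom T^{1/2}$ the core property yields $\seq g\subset\DD$ with $g_n\to h$ and $T^{1/2}g_n\to T^{1/2}h$, so $\tform(g_n-g_m,g_n-g_m)=\|T^{1/2}(g_n-g_m)\|^2\to0$, and (b) gives $h\in\D$.

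The main obstacle I anticipate is not any single estimate but securing the core property $\dom T=\DD$ for $T^{1/2}$ in a form valid in the real setting, since this powers both the approximating sequences in $(i)\Rightarrow(ii)$(a) and the reverse inclusion $\dom T^{1/2}\subseteq\D$ in $(ii)\Rightarrow(i)$; I expect the von Neumann argument to settle it cleanly in both the real and complex cases. A secondary point demanding care is the limit passage in the identity on $\D$, where I must keep the values $\tform(h_n,h_n)$ bounded in order to control the mixed term through the Cauchy--Schwarz inequality for $\tform$.
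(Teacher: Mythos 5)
Your proposal is correct and follows essentially the same route as the paper's proof: both directions hinge on the closedness of $T^{1/2}$ and on $\dom T=\DD$ being a core for $T^{1/2}$, which the paper simply invokes and you justify via von Neumann's theorem applied to $T^{1/2}$ (a legitimate way to keep the argument valid over real Hilbert spaces). The only cosmetic difference is in the limit passage for the identity on $\D$: you approximate both arguments and must control $\tform(h_n,h_n)$, whereas it suffices to approximate $g$ alone, keeping $h\in\D\subseteq\dom T^{1/2}$ fixed and using $\tform(g_n,h)=\sip{Tg_n}{h}=\sip{T^{1/2}g_n}{T^{1/2}h}$.
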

\begin{proof}
    First, we prove that (i) implies (ii). To this end, consider a vector $g\in\D=\dom T^{1/2}$. Since $\dom T$ is core for $T^{1/2}$,  there exists $\seq g\subset \dom T=\DD $ such that $g_n\to g$ and $T^{1/2}g_n\to T^{1/2}g$. Therefore,
    \begin{equation*}
        \tform(g_n-g,g_n-g)=\|T^{1/2}(g_n-g)\|^2\to0,
    \end{equation*}
    which proves (a). Consider now a sequence $\seq g$ of $\DD$ such that $g_n\to g$ and $\tform(g_n-g,g_n-g)\to0$ for some $g\in\hil$. Then 
    \begin{equation*}
        \|T^{1/2}(g_n-g_m)\|^2=\tfrom(g_n-g_m,g_n-g_m)\to0,
    \end{equation*}
    hence $g\in\dom T^{1/2}=\D$, thanks to the closedness of $T^{1/2}$. 

    We now prove that (ii) implies (i). Consider first $g\in\D$ and choose $\seq g\subset \DD=\dom T$ as in (a). Then 
    \begin{equation*}
        \|T^{1/2}(g_n-g_m)\|^2=\sip{T(g_n-g_m)}{g_n-g_m}=\tform(g_n-g_m,g_n-g_m)\to0,
    \end{equation*}
    which in turn implies $g\in\dom T^{1/2}$ and $T^{1/2}g_n\to T^{1/2}g$. In particular, we obtain that $\D\subset \dom T^{1/2}$ and that 
    \begin{equation*}
        \sip{T^{1/2}g}{T^{1/2}h}=\tform(g,h),\qquad g,h\in \D. 
    \end{equation*}
    To prove the inclusion in the other direction, let $g \in \dom T^{1/2}$, and let $\seq g$ be a sequence in $\dom T = \D$ such that $g_n \to g$ and $T^{1/2}g_n \to T^{1/2}g$. Then, using the fact that
    \begin{equation*}
            \tform(g_n - g_m, g_n - g_m) = \sip{T(g_n - g_m)}{g_n - g_m}=\|T^{1/2}(g_n - g_m)\|^2 \to 0,
    \end{equation*}
    and taking into account point (b), we obtain $g \in \mathcal{D}$. Consequently, the inclusion $\dom T^{1/2} \subseteq \mathcal{D}$ also holds.
\end{proof}
\begin{corollary}
    Let $T$ be a densely defined (not necessarily closed) operator in a real or complex Hilbert space $\hil$ such that $T^*T$ is a self-adjoint operator. Then the following two statements are equivalent:
    \begin{enumerate}[label=\textup{(\roman*)}]
        \item $\dom T=\dom (T^*T)^{1/2}$ and $\sip{Tg}{Th}=\sip{(T^*T)^{1/2}h}{(T^*T)^{1/2}g}$ for every $g,h\in \dom T$,
        \item the following two conditions are fulfilled: 
    \begin{enumerate}[label=\textup{(\alph*)}]
            \item  for every $g\in\dom T$ there is a sequence $\seq g\subset \dom (T^*T)$, such that $g_n\to g$ and $Tg_n\to Tg$,
            \item for each sequence $\seq g\subset \dom(T^*T)$, such that $g_n\to g$ and $T(g_n-g_m)\to0$, it follows that $g\in\dom T$.
        \end{enumerate}     
    \end{enumerate}
\end{corollary}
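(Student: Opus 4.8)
The plan is to apply Theorem~\ref{T:2.7} to the nonnegative symmetric form
\begin{equation*}
    \tform(g,h):=\sip{Tg}{Th},\qquad g,h\in\D:=\dom T,
\end{equation*}
and to show that its representing operator in the sense required there is precisely $T^*T$. Since $T$ is densely defined, $\D=\dom T$ is dense in $\hil$; the form is symmetric because $\tform(h,g)=\sip{Th}{Tg}=\overline{\tform(g,h)}$, and nonnegative since $\tform(g,g)=\|Tg\|^2\geq0$. Hence $\tform$ meets the standing hypotheses of the section, and I may invoke the earlier results for it.

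The first substantive step is to identify the domain $\DD$ of Lemma~\ref{L:L1}. By the Remark following that lemma, $g\in\DD$ if and only if $g\in\dom T$ and there is a constant $C_g\geq0$ with $\abs{\sip{Tg}{Th}}^2\leq C_g\sip hh$ for every $h\in\dom T$. By the very definition of the adjoint, this is exactly the assertion that $Tg\in\dom T^*$, so that
\begin{equation*}
    \DD=\set{g\in\dom T}{Tg\in\dom T^*}=\dom(T^*T).
\end{equation*}
Using the hypothesis that $T^*T$ is self-adjoint, I would then verify that $T^*T$ is a positive self-adjoint representation of $\tform$ with domain equal to $\DD$: it is positive because $\sip{T^*Tg}{g}=\|Tg\|^2\geq0$, and for $g\in\dom(T^*T)$ and $h\in\D$ one has $\sip{T^*Tg}{h}=\sip{Tg}{Th}=\tform(g,h)$. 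Thus $T^*T$ plays exactly the role of the operator denoted $T$ in Theorem~\ref{T:2.7}, and $(T^*T)^{1/2}$ is available as its positive self-adjoint square root.

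It then remains only to translate the two equivalent statements of Theorem~\ref{T:2.7} into the language of the corollary, using the identities
\begin{equation*}
    \tform(g_n-g,g_n-g)=\|Tg_n-Tg\|^2,\qquad \tform(g_n-g_m,g_n-g_m)=\|Tg_n-Tg_m\|^2 .
\end{equation*}
These show that $\tform(g_n-g,g_n-g)\to0$ is equivalent to $Tg_n\to Tg$, and that $\tform(g_n-g_m,g_n-g_m)\to0$ is equivalent to $T(g_n-g_m)\to0$. With $\DD=\dom(T^*T)$ and $\D=\dom T$, statement (i) of Theorem~\ref{T:2.7} coincides with statement (i) of the corollary (the Hermitian form being read in either order), while conditions (a) and (b) of Theorem~\ref{T:2.7}(ii) become exactly conditions (a) and (b) of the corollary. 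The desired equivalence follows at once.

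I expect the main obstacle to be the identification $\DD=\dom(T^*T)$ together with the verification that $T^*T$ is an admissible representing operator for Theorem~\ref{T:2.7}; this is precisely where the hypothesis that $T^*T$ be self-adjoint, rather than merely positive symmetric, is needed, since Theorem~\ref{T:2.7} presupposes a self-adjoint representing operator and the existence of its square root. Once this is secured, the rest is the routine dictionary between the form conditions and the operator conditions recorded above.
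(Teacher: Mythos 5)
Your proposal is correct and takes essentially the same approach as the paper: both introduce the form $\tform(g,h)=\sip{Tg}{Th}$ on $\D=\dom T$, identify $\DD=\dom(T^*T)$, and apply Theorem~\ref{T:2.7} with $T^*T$ as the representing operator. Your write-up simply spells out the adjoint-domain computation and the dictionary between form conditions and operator conditions that the paper's proof compresses into ``clearly''.
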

\begin{proof}
    Consider the form $\tform$ by
    \begin{equation*}
        \tform(g,h)\coloneqq \sip{Tg}{Th},\qquad g,h\in\dom T.
    \end{equation*}
    Then, clearly, $\dom T^*T=\DD$ and
    \begin{equation*}
        \tform(g,h)=\sip{T^*Tg}{h},\qquad g\in\dom T^*T, h\in\dom T.
    \end{equation*}
    Hence the statement follows from Theorem \ref{T:2.7}.    
\end{proof}

\begin{remark}
It is well known that if $T$ is a densely defined closed operator, then both $T^*T$ and $TT^*$ are self-adjoint, see \cites{Neumann,Schmudgen,Stone}. However, it is not difficult to construct an example where $T^*T$ (or $TT^*$) is self-adjoint even though $T$ itself is not closed. (For example, let $S$ be a closed operator and let $T$ be its restriction to $\dom T^*T$; then $T^*T$ is self-adjoint, but $T$ is typically not closed.) 

However, if both $T^*T$ and $TT^*$ are self-adjoint, then $T$ must necessarily be closed; see \cites{SZ-TZS:reversed, SZTZS-Adjoint}.
\end{remark}
\section{The Friedrichs extension of positive operators}
Using the construction provided in the previous section, we now present the so-called Friedrichs extension of a densely defined positive operator $T$. This distinguished extension is the maximal element with respect to the form order `$\preceq$' among all positive self-adjoint extensions of $T;$
cf.\ \cites{AndoNishio,Friedrichs,Freudenthal}; see also \cites{Prokaj0,Prokaj,SebTarcsayWasaa,SebTarcsayOpuscula}. Although the result itself is classical, the construction we provide here is as natural and perhaps as simple as possible.

\begin{theorem}\label{T:Friedrichs}
    Let $T$ be a positive symmetric operator in the real or complex Hilbert space $\hil$, with a dense domain $\D\coloneqq \dom T$. Then there exists a positive self-adjoint operator $T_F$ of $T$ such that
    \begin{enumerate}[label=\textup{(\alph*)}]
        \item $T\subset T_F$, that  is, $T_F$ is a positive self-adjoint extension of $T$,
        \item $S\preceq T_F$ for every positive self-adjoint extension $S$ of $T$.
    \end{enumerate}
\end{theorem}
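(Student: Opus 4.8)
The plan is to obtain $T_F$ as the positive self-adjoint operator that represents the \emph{closure} of the form canonically attached to $T$. Set
\begin{equation*}
    \tform(g,h)\coloneqq\sip{Tg}{h},\qquad g,h\in\D\coloneqq\dom T.
\end{equation*}
Since $T$ is positive and symmetric, $\tform$ is a nonnegative symmetric form on the dense subspace $\D$, so the entire apparatus of Section~2 applies. For $g\in\dom T$ one has $\abs{\tform(g,h)}\leq\|Tg\|\,\|h\|$ for all $h\in\D$, whence $\dom T\subseteq\DD$; as $\DD\subseteq\D$ always holds, in fact $\DD=\D=\dom T$. I would then define $T_F\coloneqq Q^*Q^{**}$ by appealing to the representation theorem for closable forms proved in Section~2, once the closability of $\tform$ has been secured.

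Proving that $\tform$ is closable (equivalently, that the canonical embedding $Q$ is closable) is the crux of the argument and the step I expect to be the main obstacle, since it is precisely here that the symmetry of $T$ is used. Let $\seq g$ lie in $\D$ with $g_n\to0$ in $\hil$ and $\tform(g_n-g_m,g_n-g_m)\to0$. The Cauchy condition forces $\tform(g_n,g_n)\to c$ for some $c\geq0$, and the task is to show $c=0$. By symmetry of $T$, for each fixed $m$,
\begin{equation*}
    \tform(g_n,g_m)=\sip{Tg_n}{g_m}=\sip{g_n}{Tg_m}\to0\qquad(n\to\infty),
\end{equation*}
because $g_n\to0$. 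Splitting $\tform(g_n,g_n)=\tform(g_n,g_n-g_m)+\tform(g_n,g_m)$ and bounding the first summand by Cauchy--Schwarz yields $c\leq c^{1/2}\varepsilon^{1/2}$ for every $\varepsilon>0$, so $c=0$. Hence $\tform$ is closable and $T_F=Q^*Q^{**}$ is a well-defined positive self-adjoint operator representing $\tform$, with $\dom T_F^{1/2}=\dom Q^{**}$.

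For property (a), the representation theorem for closable forms gives $\DD\subseteq\dom T_F$ and $\sip{T_Fg}{h}=\tform(g,h)=\sip{Tg}{h}$ for $g\in\DD=\D$ and $h\in\D$; the density of $\D$ then forces $T_Fg=Tg$. Thus $\D\subseteq\dom T_F$ and $T_F$ agrees with $T$ on $\dom T$, i.e.\ $T\subset T_F$.

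For the maximality (b), let $S$ be any positive self-adjoint extension of $T$, so $\D\subseteq\dom S$ and $S|_{\D}=T$. Take $h\in\dom T_F^{1/2}=\dom Q^{**}$ and pick $g_n\in\D$ with $g_n\to h$ in $\hil$ and $Qg_n\to Q^{**}h$ in $\hilt$, so that $\tform(g_n-g_m,g_n-g_m)\to0$ and $\tform(g_n,g_n)\to\|T_F^{1/2}h\|^2$. Since $S$ extends $T$,
\begin{equation*}
    \|S^{1/2}(g_n-g_m)\|^2=\sip{S(g_n-g_m)}{g_n-g_m}=\tform(g_n-g_m,g_n-g_m)\to0,
\end{equation*}
and $g_n\to h$; the closedness of $S^{1/2}$ then yields $h\in\dom S^{1/2}$, $S^{1/2}g_n\to S^{1/2}h$, and $\|S^{1/2}h\|^2=\lim_n\tform(g_n,g_n)=\|T_F^{1/2}h\|^2$. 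Therefore $\dom T_F^{1/2}\subseteq\dom S^{1/2}$ and $\|S^{1/2}h\|^2\leq\|T_F^{1/2}h\|^2$ on $\dom T_F^{1/2}$ (in fact with equality), which is exactly $S\preceq T_F$. Apart from the closability step, the only points requiring care are the standard identifications $\dom T_F^{1/2}=\dom Q^{**}$ and the invocation of the closedness of $S^{1/2}$.
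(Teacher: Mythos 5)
Your proposal is correct, and the core construction coincides with the paper's: both introduce $\tform(g,h)=\sip{Tg}{h}$ on $\D=\dom T$, use the symmetry of $T$ to get $\DD=\D$, and define $T_F\coloneqq Q^*Q^{**}$. You diverge in the two supporting steps, and the comparison is instructive. For closability you run the classical sequential argument ($g_n\to 0$ and $\tform$-Cauchy imply $\tform(g_n,g_n)\to 0$, via the split $\tform(g_n,g_n)=\tform(g_n,g_n-g_m)+\tform(g_n,g_m)$), which is valid; but the step you flag as ``the crux'' is actually free in this framework: since $\DD=\dom Q^*Q=\D=\dom Q$, the range of $Q$, which is dense in $\hilt$, lies in $\dom Q^*$, so $Q^*$ is densely defined and $Q$ is closable --- this is exactly how the paper argues, and it is where the symmetry of $T$ enters there as well (through $\abs{\sip{Th}{g}}=\abs{\sip{h}{Tg}}\leq\|h\|\|Tg\|$). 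For maximality (b) the paper proceeds structurally: it forms $\sform(g,h)=\sip{Sg}{h}$ with its own embedding $Q_\sform$, notes $T\subset S$ gives $Q\subset Q_\sform$ and hence $Q^{**}\subset Q^{**}_\sform$, and uses $S=Q_\sform^*Q_\sform^{**}$ (the first part of the proof applied to $S$, which admits no proper self-adjoint extension) to conclude $\dom T_F^{1/2}=\dom Q^{**}\subseteq\dom Q^{**}_\sform=\dom S^{1/2}$ with equality of norms. You instead approximate $h\in\dom Q^{**}$ by $g_n\in\D$ with $Qg_n\to Q^{**}h$ and invoke the closedness of $S^{1/2}$; this is precisely the argument the paper itself deploys in the theorem immediately following this one, so it is fully in the paper's spirit. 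Your route is more hands-on and avoids the tacit isometric identification of $\hilt$ inside $\hil_{\sform}$ that underlies the paper's inclusion $Q\subset Q_\sform$, at the cost of an extra sequence argument; the paper's version buys brevity and a uniform ``everything is an embedding operator'' viewpoint. Both approaches deliver the same stronger conclusion, namely $\|S^{1/2}h\|=\|T_F^{1/2}h\|$ for all $h\in\dom T_F^{1/2}$, not merely the inequality required by $S\preceq T_F$.
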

\begin{proof}
    Let us introduce the nonnegative form \begin{equation*}
        \tform(g,h)\coloneqq \sip{Tg}{h},\qquad g,h\in\D,
    \end{equation*}
    and consider the corresponding Hilbert space $\hilt$ and embedding operator $Q:\hil\supseteq \D\to\hilt$ of \eqref{E:Q}. We claim that 
    \begin{equation*}
        \D=\DD.
    \end{equation*}
    For let $g\in\D$, then
    \begin{align*}
        \abs{\tform(h,g)}^2=&\abs{\sip{Th}{g}}^2=\abs{\sip{h}{Tg}}^2\leq \|h\|^2\|Tg\|^2,
    \end{align*}
    whence $g\in\DD$ according to \eqref{E:DD}.
As a direct consequence, we obtain that the adjoint operator $Q^*$ is densely defined, since it contains the dense subspace $ \ran Q = \DD / \ker \mathfrak{t} \subseteq \hilt$. Consequently, $ Q $ is closable. On the other hand, for all vectors $ h, g \in \D $, we have
\[
\sip{Tg}{h}  = \mathfrak{t}(g, h) = \sip{Q^*Q g}{h},
\]
therefore $T = Q^* Q$. This implies that $T_F \coloneqq Q^* Q^{**}$ is a positive self-adjoint operator extending $ T $.

Finally, we show that $ T_F $ is maximal among the positive self-adjoint extensions of $ T $. Let $ T \subset S $ be an arbitrary positive self-adjoint extension of $T$. Consider the form associated with $ S $, given by
\[
\mathfrak{s}(g, h) \coloneqq \sip{ Sg}{h} \qquad (g, h \in \dom S),
\]
and let $ \hil_{\mathfrak{s}} $ be the corresponding auxiliary Hilbert space, with embedding operator $ Q_{\mathfrak{s}} $. Then, taking into account that $ T \subset S $, we have $ Q \subset Q_{\mathfrak{s}} $, and therefore $ Q^{**} \subset Q_{\mathfrak{s}}^{**} $. Moreover, according to the first part of the proof, $ S = Q_{\mathfrak{s}}^{*} Q_{\mathfrak{s}}^{**} $, since $ S $ does not admit any proper self-adjoint extension. As a consequence,
\begin{equation*}
    \dom T_F^{1/2}=\dom Q^{**}\subseteq \dom Q^{**}_{\sform}=\dom S^{1/2},
\end{equation*}
and 
\begin{equation*}
    \|T_F^{1/2}h\|^2=\|Q^{**}h\|^2=\| Q^{**}_{\sform}h\|^2=\| S^{1/2}h\|^2
\end{equation*}
for every $h\in\dom T_F^{1/2}$. Hence $S\preceq T_F$, as stated. 
\end{proof}

\begin{theorem}
    Let $\tform$ be of nonnegative symmetric form such that 
    \begin{equation*}
        \DD=\D.
    \end{equation*}
    Then $\tform $ is closable and the Friedrichs extension of the positive operator $T\coloneqq Q^*Q$ is equal to $Q^*Q^{**}$, that is, $T_F=Q^*Q^{**}$.
\end{theorem}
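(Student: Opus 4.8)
The plan is to split the statement into its two assertions and reduce each to facts already in place. For closability, recall the equivalence stated just before the earlier corollary on closable forms: $\tform$ is closable precisely when the canonical embedding $Q$ of \eqref{E:Q} is a closable operator, which in turn holds exactly when $Q^*$ is densely defined in $\hilt$. Thus the first task is merely to deduce density of $\dom Q^*$ from the hypothesis $\DD=\D$.

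First I would observe that $\DD=\dom Q^*Q=\D$ says exactly that every $g\in\D$ satisfies $Qg\in\dom Q^*$; that is, $\ran Q\subseteq\dom Q^*$. Since $\ran Q$ is dense in $\hilt$ (the ``obvious fact'' noted after \eqref{E:Q}), $\dom Q^*$ is dense as well, whence $Q$ is closable with closure $Q^{**}$, and therefore $\tform$ is closable. This disposes of the first assertion.

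For the second assertion I would set $T:=Q^*Q$ and verify it is a densely defined positive symmetric operator, so that Theorem \ref{T:Friedrichs} applies: its domain is $\dom Q^*Q=\DD=\D$, which is dense; positivity follows from $\sip{Tg}{g}=\sipt{Qg}{Qg}=\tform(g,g)\geq0$; and symmetry from $\sip{Tg}{h}=\tform(g,h)=\overline{\tform(h,g)}=\sip{g}{Th}$ for $g,h\in\D$. The crucial point is the identification of the auxiliary data: the nonnegative form $\sform(g,h):=\sip{Tg}{h}$ introduced in the proof of Theorem \ref{T:Friedrichs} coincides with $\tform$, because $\sip{Tg}{h}=\sip{Q^*Qg}{h}=\sipt{Qg}{Qh}=\tform(g,h)$ for all $g,h\in\D=\DD$ by Lemma \ref{L:L1}. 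As the two forms are literally equal, they share the same kernel, the same inner product on $\D/\ker\tform$, the same completion $\hilt$, and the same canonical embedding $Q$. Consequently the operator produced by Theorem \ref{T:Friedrichs} is $T_F=Q^*Q^{**}$ with the very $Q$ we started from.

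The step I expect to require the most care is this identification, although it is conceptual rather than computational: one must be sure that feeding $T=Q^*Q$ into the Friedrichs construction does not manufacture a new, merely isometrically isomorphic auxiliary space, but instead reproduces $\hilt$ and $Q$ exactly. Equality of the forms $\sform=\tform$ on $\D$ guarantees precisely this, so no genuine obstacle remains; the remaining verifications (density of $\D$, positivity and symmetry of $T$, and reading off $T_F=Q^*Q^{**}$ from the conclusion of Theorem \ref{T:Friedrichs}) are routine.
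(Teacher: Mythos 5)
Your proposal is correct, and its first half is exactly the paper's argument: $\DD=\dom Q^*Q=\D$ forces $\ran Q\subseteq\dom Q^*$, so $Q^*$ is densely defined, $Q$ is closable, and hence so is $\tform$. For the identity $T_F=Q^*Q^{**}$, however, you take a genuinely different route. The paper does not recycle the construction of Theorem \ref{T:Friedrichs}; instead it re-proves maximality from scratch: for an arbitrary positive self-adjoint extension $S$ of $T=Q^*Q$ and $g\in\dom Q^{**}$ it chooses $(g_n)\subset\D$ with $g_n\to g$ and $Qg_n\to Q^{**}g$, observes that $\|S^{1/2}(g_n-g_m)\|^2=\sip{T(g_n-g_m)}{g_n-g_m}=\|Q(g_n-g_m)\|^2\to0$, concludes $g\in\dom S^{1/2}$ from closedness of $S^{1/2}$, and computes $\|S^{1/2}g\|^2=\lim_n\|Qg_n\|^2=\|Q^{**}g\|^2=\|(Q^*Q^{**})^{1/2}g\|^2$, which yields $S\preceq Q^*Q^{**}$ directly. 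You instead feed $T=Q^*Q$ back into the Friedrichs construction and note that the induced form $\sform(g,h)=\sip{Tg}{h}$ coincides with $\tform$ on $\D=\DD$ by Lemma \ref{L:L1}, so the construction reproduces the same kernel, the same completion $\hilt$, and the very same embedding $Q$, whence the constructed extension is $Q^*Q^{**}$. Your identification step is sound -- the auxiliary space depends only on the form, and equal forms give literally the same $Q$, not merely an isomorphic copy. Two points are worth flagging, though neither is a gap: first, your argument leans on the \emph{proof} of Theorem \ref{T:Friedrichs} rather than its statement, since the statement only asserts existence of a maximal extension while the formula $T_F=Q_{\sform}^*Q_{\sform}^{**}$ lives inside the proof; second, for ``the'' Friedrichs extension to be a well-defined object equal to the constructed one, you tacitly use that maximal elements under $\preceq$ are unique (antisymmetry of the form order, via the second representation theorem) -- the paper uses the same fact tacitly when it says it ``clearly suffices'' to show $S\preceq Q^*Q^{**}$ for every extension $S$. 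What each approach buys: yours avoids duplicating the maximality argument and makes the theorem a corollary of Theorem \ref{T:Friedrichs}; the paper's direct sequence argument keeps the proof self-contained and independent of the internals of the earlier construction.
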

\begin{proof}
First, we observe that $\tform$ is closable. Indeed, since $\D = \dom Q$ and és $\DD = \dom Q^*Q$, the assumption $\D = \DD$ implies that $\dom Q^*$ contains the dense subspace $\ran Q=\hilt/\ker \tform$. Consequently, the operator $Q$ is closable, which in turn implies that the form $\tform$ is also closable.

It remains to show that $Q^*Q^{**}$ coincides with the Friedrichs extension of the positive operator $T := Q^*Q$, i.e.,
\begin{equation}
Q^*Q^{**} = T_F.
\end{equation}
To this end, it clearly suffices to show that for any positive self-adjoint extension $S$ of $T$, the relation $S \preceq Q^*Q^{**}$ holds. 

Let $g \in \operatorname{dom} Q^{**}$. Then there exists a sequence $\seq g \subset \D$ such that $g_n \to g$ and $Qg_n \to Q^{**}g$. In this case, we have
 \begin{align*}
     \|S^{1/2}(g_n-g_m)\|^2&=\sip{S(g_n-g_m)}{g_n-g_m}\\&=\sip{T(g_n-g_m)}{g_n-g_m}\\&=\|Q(g_n-g_m)\|^2\to0,\qquad (n,m\to\infty),
 \end{align*}
hence $g\in\dom S^{1/2}$ due to closedness. Also
\begin{align*}
\|(Q^*Q^{**})^{1/2}g\|^2 &= \|Q^{**}g\|^2 = \lim_{n \to \infty} \|Qg_n\|^2 \\&= \lim_{n \to \infty} \sip{Tg_n}{g_n} = \lim_{n \to \infty} \|S^{1/2}g_n\|^2=\|S^{1/2}g\|^2.    
\end{align*}
Putting all of the above together, we conclude that $S \preceq Q^*Q^{**}$, and hence $Q^*Q^{**}$ indeed coincides with the Friedrichs extension of $T$.
\end{proof}
According to the above theorem, the equality $\D = \DD$ implies that the form $\tform$ is closable. However, the following statement shows that the closedness of $\tform$ under the same assumption holds only under a very specific condition -- namely, when $\tform$ is a bounded form.

\begin{proposition}\label{P:3.3}
    Let $\tform$ be a closed form in the real or complex Hilbert space $\hil$ with dense domain $\D$. Then the following assertions are equivalent:
    \begin{enumerate}[label=\textup{(\roman*)}]
        \item $\D=\DD$,
        \item $\tform$ is bounded.
    \end{enumerate}
\end{proposition}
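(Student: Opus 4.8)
The plan is to prove the two implications separately. The implication (ii)$\Rightarrow$(i) is immediate and the real content lies in (i)$\Rightarrow$(ii).

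For (ii)$\Rightarrow$(i), I would simply unwind the definition \eqref{E:DD}. Assuming $\tform$ is bounded, there is an $M\geq0$ with $\abs{\tform(g,h)}\leq M\|g\|\,\|h\|$ for all $g,h\in\D$. Fixing $g\in\D$, the estimate $\abs{\tform(g,h)}^2\leq M^2\|g\|^2\|h\|^2$ shows that the supremum defining $\DD$ is finite, so $g\in\DD$. Combined with the trivial inclusion $\DD\subseteq\D$ this yields $\D=\DD$.

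For (i)$\Rightarrow$(ii) I would first translate the hypothesis into operator language using the material on closed forms above: since $\tform$ is closed, the canonical embedding $Q$ is a closed operator, $T\coloneqq Q^*Q$ is positive self-adjoint with $\dom T=\DD$, and $\dom T^{1/2}=\dom Q=\D$. Hence the assumption $\D=\DD$ is precisely the equality $\dom T=\dom T^{1/2}$, and the proposition reduces to the purely operator-theoretic claim that a positive self-adjoint $T$ with $\dom T=\dom T^{1/2}$ is bounded. To establish this, I would equip the common domain $\D$ with the two graph norms $\|x\|_T^2=\|x\|^2+\|Tx\|^2$ and $\|x\|_{T^{1/2}}^2=\|x\|^2+\|T^{1/2}x\|^2$. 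Because $T$ and $T^{1/2}$ are closed, both norms are complete, so $(\D,\|\cdot\|_T)$ and $(\D,\|\cdot\|_{T^{1/2}})$ are Banach spaces carried by the \emph{same} underlying set. The identity map from the first to the second is bounded, since $\|T^{1/2}x\|^2=\sip{Tx}{x}\leq\tfrac12(\|Tx\|^2+\|x\|^2)$ gives $\|x\|_{T^{1/2}}^2\leq\tfrac32\|x\|_T^2$.

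The decisive step is then to invoke the bounded inverse theorem: the identity in the opposite direction is also bounded, so there is a constant $C$ with $\|Tx\|^2\leq\|x\|_T^2\leq C^2\|x\|_{T^{1/2}}^2=C^2\|x\|^2+C^2\sip{Tx}{x}$. Using $\sip{Tx}{x}\leq\|Tx\|\,\|x\|$ and solving the resulting quadratic inequality in $\|Tx\|$ produces an estimate $\|Tx\|\leq M\|x\|$ for all $x\in\dom T$. Finally, a self-adjoint (hence closed) operator that is bounded on its dense domain has closed, and therefore full, domain, so $T\in\bh$; the representation $\tform(g,h)=\sip{Tg}{h}$ valid for all $g,h\in\D=\dom T$ then gives $\abs{\tform(g,h)}\leq\|T\|\,\|g\|\,\|h\|$, i.e.\ $\tform$ is bounded. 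I expect the main obstacle to be exactly this middle passage—turning a bare equality of domains into a quantitative norm bound—and the bounded inverse theorem is the tool that resolves it, while also keeping the whole argument valid over real Hilbert spaces, where the spectral theorem is not at our disposal.
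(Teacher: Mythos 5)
Your proof is correct, but the decisive implication (i)$\Rightarrow$(ii) runs along a genuinely different line from the paper's. You reduce the statement, via the identity $\dom (Q^*Q)^{1/2}=\dom Q$, to the purely operator-theoretic claim that a positive self-adjoint $T$ with $\dom T=\dom T^{1/2}$ is bounded, and you prove that claim by comparing the two graph norms on the common domain through the bounded inverse theorem, closing with the quadratic inequality $\|Tx\|^2\leq C^2\|x\|^2+C^2\|Tx\|\,\|x\|$. The paper instead stays at the level of the closed embedding $Q$ itself: it invokes von Neumann's graph decomposition $\hilt\times\hil=G(Q^*)\oplus V[G(Q)]$, writes an arbitrary $z\in\hilt$ as $z=u-Qg$ with $u\in\dom Q^*$, observes that hypothesis (i) forces $Qg\in\dom Q^*$, concludes $\dom Q^*=\hilt$, and then applies the closed graph theorem to make $Q^*$ (hence $Q$, hence $\tform$) bounded. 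Both arguments ultimately rest on a Baire-category tool (bounded inverse vs.\ closed graph), but the trade-offs differ: the paper's route avoids square roots entirely, so it is independent of the square-root machinery that the real-Hilbert-space setting makes delicate, whereas your route needs $\dom T^{1/2}=\dom Q$ as an extra input --- a standard fact the paper itself uses elsewhere, so there is no gap, but it is an additional dependency. In exchange, your argument isolates a clean, reusable lemma ($\dom T=\dom T^{1/2}$ implies $T$ bounded), produces the quantitative bound $\|Tx\|\leq M\|x\|$ explicitly, and shows a posteriori that $\D=\hil$, which the paper's version yields only implicitly. Your treatment of (ii)$\Rightarrow$(i) is also slightly more economical than the paper's, since you verify $\D\subseteq\DD$ directly from the defining supremum \eqref{E:DD} without appealing to the observation that closedness plus boundedness forces $\D=\hil$.
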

\begin{proof}
Clearly, if the form $\tform$ is closed and continuous, then $\D = \DD = \hil$.  
Conversely, assume that condition (i) holds for a closed form $\tform$. In this case, $Q$ is a closed operator for which
\begin{equation*}
\dom Q = \dom Q^*Q.
\end{equation*}
Consider the “flip” operator $V : \hilt  \times \hil \to \hil \times \hilt$ defined by $V\{h, g\} := \{-g, h\}$. Then the following well-known relation holds between the graphs of the operators $Q$ and $Q^*$:
\begin{equation*}
    \hilt \times \hil = G(Q^*) \oplus V[G(Q)].
\end{equation*}
In particular, any vector $z \in \hilt$ can be written in the form $z = u - Qg$ for suitable $g \in \dom Q$ and $u \in \dom Q^*$.  
By assumption (i), we have $Qg \in \dom Q^*$, hence $z \in \dom Q^*$, which implies that $\dom Q^* = \hilt$. By the Closed Graph Theorem, it follows that $Q^*$ is a bounded operator, and therefore the form $\tform$ is also bounded.
\end{proof}

The following theorem addresses the case when the form $\tform$ satisfies $\D = \DD$. According to Proposition \ref{P:3.3}, in this situation the form $\tform$ is either bounded or not closed.

\begin{theorem}
    Let $\tform$ by a densely defined nonnegative form on a real or complex Hilbert space $\hil$ satisfying $\DD=\D$, that  is, 
    \begin{equation}
        \sup\set{\abs{\tform(g,h)}^2}{h\in\D,\sip hh\leq 1}\qquad \mbox{for all $g\in\D$}. 
    \end{equation}
    Then $\tform$ is closable, and the Friedrichs extension of $Q^*Q$ is identical to $Q^*Q^{**}$: 
    \begin{equation}
        Q^*Q^{**}=(Q^*Q)^{}_F,
    \end{equation}
    where $Q:\hil\to\hilt$ is the canonical quotient operator in \eqref{E:Q}. 
\end{theorem}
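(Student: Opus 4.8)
The plan is to treat the two assertions in turn, following the template already established for the hypothesis $\DD=\D$. First I would establish closability. The hypothesis $\DD=\D$ reads exactly $\dom Q^*Q=\dom Q$, so for every $g\in\D$ the vector $Qg$ lies in $\dom Q^*$; since $\ran Q=Q(\D)$ is dense in $\hilt$, this shows $\dom Q^*$ is dense. An operator whose adjoint is densely defined is closable, so $Q$ is closable with closure $Q^{**}$, and the closability of $Q$ is equivalent to that of the form $\tform$. By von Neumann's theorem, since $Q^{**}$ is closed and densely defined, $Q^*Q^{**}=(Q^{**})^*Q^{**}$ is positive self-adjoint; moreover $T\coloneqq Q^*Q$ is represented by itself via Lemma \ref{L:L1} with $\dom T=\DD=\D$, and $Q^*Q^{**}$ visibly extends $T$.

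Next I would invoke the maximality characterization of Theorem \ref{T:Friedrichs}: since $T_F$ is the unique maximal positive self-adjoint extension of $T$ with respect to $\preceq$, it suffices to verify that $Q^*Q^{**}$ is itself maximal, i.e.\ that $S\preceq Q^*Q^{**}$ for every positive self-adjoint extension $S$ of $T$. Fixing such an $S$, I would use the standard identity $\dom (Q^*Q^{**})^{1/2}=\dom Q^{**}$ together with $\|(Q^*Q^{**})^{1/2}g\|=\|Q^{**}g\|$, and pick for $g\in\dom Q^{**}$ an approximating sequence $g_n\in\D$ with $g_n\to g$ and $Qg_n\to Q^{**}g$.

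The heart of the argument, and the step I expect to require the most care, is transferring this convergence from the auxiliary space $\hilt$ to the square root $S^{1/2}$. Because $S$ extends $T=Q^*Q$ and every $g_n\in\D=\dom T$, I would compute
\begin{align*}
\|S^{1/2}(g_n-g_m)\|^2 &= \sip{S(g_n-g_m)}{g_n-g_m} \\
&= \sip{T(g_n-g_m)}{g_n-g_m} = \|Q(g_n-g_m)\|^2 \to 0,
\end{align*}
so $(S^{1/2}g_n)$ is Cauchy; closedness of $S^{1/2}$ then forces $g\in\dom S^{1/2}$ and $S^{1/2}g_n\to S^{1/2}g$. Passing to the limit in $\|S^{1/2}g_n\|^2=\sip{Tg_n}{g_n}=\|Qg_n\|^2$ gives $\|S^{1/2}g\|^2=\|Q^{**}g\|^2=\|(Q^*Q^{**})^{1/2}g\|^2$. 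This yields both the domain inclusion $\dom(Q^*Q^{**})^{1/2}\subseteq\dom S^{1/2}$ and the matching of norms, hence $S\preceq Q^*Q^{**}$.

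Finally I would conclude by the uniqueness of the maximal element. Applying the previous step with $S=T_F$ gives $T_F\preceq Q^*Q^{**}$, while Theorem \ref{T:Friedrichs} applied to the positive self-adjoint extension $Q^*Q^{**}$ gives $Q^*Q^{**}\preceq T_F$. Antisymmetry of $\preceq$ on positive self-adjoint operators then yields $Q^*Q^{**}=(Q^*Q)_F$, which, together with the closability established at the outset, proves the theorem.
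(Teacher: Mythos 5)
Your proposal is correct and takes essentially the same route as the paper's own proof: the identical density argument $\ran Q\subseteq\dom Q^{*}$ for closability of $Q$ (hence of $\tform$), and the identical core computation $\|S^{1/2}(g_n-g_m)\|^{2}=\sip{Q^{*}Q(g_n-g_m)}{g_n-g_m}=\|Q(g_n-g_m)\|^{2}\to 0$, followed by closedness of $S^{1/2}$ and passage to the limit, to obtain $S\preceq Q^{*}Q^{**}$ for every positive self-adjoint extension $S$ of $Q^{*}Q$. Your only addition is making the final identification explicit via Theorem \ref{T:Friedrichs} and the antisymmetry of $\preceq$, a step the paper leaves implicit; this is a harmless (indeed clarifying) refinement, not a different argument.
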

\begin{proof}
    By assumption, $\dom Q^*Q = \dom Q$, which implies that $\ran Q \subseteq \dom Q^*$. This, in turn, means that $Q^*$ is densely defined, and thus $Q$ is closable with $Q^{**} = \overline{Q}$. Consequently, the form $\mathfrak{t}$ is closable, and its closure is given by
\begin{equation*}
    \overline{\mathfrak{t}}(g, h) = \sip{Q^{**} g}{Q^{**} h}, \qquad g,h\in\dom Q^{**}.
\end{equation*}
    Clearly, $S$ is a positive self-adjoint extension of $Q^{*}Q^{**}$, we show that $S\preceq Q^*Q^{**}$. For let $g\in\dom Q^{**}$ and consider a sequence $\seq g$ of $\D$ such that $g_n\to g$ and $Qg_n\to Q^{**}g$. Then 
    \begin{align*}
        \|S^{1/2}(g_n-g_m)\|^2&=\sip{S(g_n-g_m)}{g_n-g_m}=\sip{Q^*Q(g_n-g_m)}{g_n-g_m}\\&=\|Q(g_n-g_m)\|^2\to 0,
    \end{align*}
    thus $g\in \dom S^{1/2}$ and $S^{1/2}g_n\to S^{1/2}g$. Hence 
    \begin{equation*}
        \|S^{1/2}g\|^2=\limn \|S^{1/2}g_n\|^2=\limn\|Qg_n\|^2= \|Q^{**}g\|^2=\|(Q^*Q^{**})^{1/2}g\|^2.
    \end{equation*}
    Therefore, $S\preceq Q^*Q^{**}$, which means that $Q^*Q^{**}$ is identical with the Friedrichs extension of $Q^*Q$.
\end{proof}

\end{document}